\documentclass[11pt]{amsart}
\usepackage[margin=1in]{geometry}
\usepackage{microtype}
\usepackage{amssymb,amsmath,amsthm,mathrsfs,mathtools}
\usepackage{tikz-cd}
\usepackage{hyperref}
\hypersetup{breaklinks, colorlinks}

\newtheorem{thm}{Theorem}[section]
\newtheorem{lemma}[thm]{Lemma}
\newtheorem{cor}[thm]{Corollary}
\newtheorem{prop}[thm]{Proposition}

\theoremstyle{definition}
\newtheorem{defn}[thm]{Definition}
\newtheorem{rem}[thm]{Remark}
\newtheorem{ex}[thm]{Example}

\numberwithin{equation}{section}

\def\D{\mathbf{D}^\mathrm{b}}
\def\DD{\mathbf{D}}

\def\P{\mathbb{P}}
\def\LL{\mathbb{L}}

\def\B{\mathcal{B}}
\def\E{\mathcal{E}}
\def\O{\mathcal{O}}
\def\Z{\mathcal{Z}}

\def\cA{\mathcal{A}}
\def\cC{\mathcal{C}}
\def\cF{\mathcal{F}}
\def\cG{\mathcal{G}}
\def\cH{\mathcal{H}}
\def\cI{\mathcal{I}}
\def\cL{\mathcal{L}}
\def\cQ{\mathcal{Q}}
\def\cN{\mathcal{N}}
\def\cS{\mathcal{S}}
\def\cT{\mathcal{T}}
\def\cV{\mathcal{V}}

\def\sL{\mathscr{L}}

\newcommand{\kk}{\Bbbk}
\newcommand{\wt}{\widetilde}

\newcommand{\rank}{\operatorname{rank}}
\newcommand{\perf}{\operatorname{perf}}
\newcommand{\sg}{\operatorname{sg}}
\newcommand{\Bl}{\operatorname{Bl}}
\newcommand{\M}{\operatorname{M}}
\newcommand{\Gr}{\operatorname{Gr}}
\newcommand{\Proj}{\operatorname{Proj}}
\newcommand{\Spec}{\operatorname{Spec}}
\newcommand{\Coh}{\operatorname{Coh}}
\newcommand{\QCoh}{\operatorname{QCoh}}
\newcommand{\op}{\operatorname{op}}
\newcommand{\End}{\operatorname{End}}
\newcommand{\Ext}{\operatorname{Ext}}
\newcommand{\Hom}{\operatorname{Hom}}
\newcommand{\RHom}{\operatorname{RHom}}
\newcommand{\coker}{\operatorname{coker}}
\newcommand{\codim}{\operatorname{codim}}
\newcommand{\Sym}{\operatorname{Sym}}
\newcommand{\id}{\operatorname{id}}
\newcommand{\Bs}{\operatorname{Bs}}

\newcommand{\sEnd}{\mathscr{E}\kern -1pt nd}
\newcommand{\sHom}{\mathscr{H}\kern -2pt om}

\title{Nodal quintic del Pezzo threefolds and their derived categories}
\author{Fei Xie}
\address{School of Mathematics, University of Edinburgh, James Clerk Maxwell Building, Peter Guthrie Tait Road, Edinburgh, EH9 3FD, UK}
\email{fei.xie@ed.ac.uk}

\begin{document}
\maketitle
\begin{abstract}
    We construct a Kawamata type semiorthogondal decomposition for the bounded derived category of coherent sheaves of nodal quintic del Pezzo threefolds, decomposing the bounded derived category into bounded derived categories of finite dimensional algebras. This is achieved by constructing birational maps from nodal quintic del Pezzo threefolds to quadric surface fibrations over the projective line.
\end{abstract}
\section{Introduction}
The most interesting smooth Fano threefolds are the ones with Picard rank $1$. Their derived categories are quite well known \cite{kuz3f}. But much less is known about the derived categories of singular Fano threefolds. The first examples are given by Kawamata \cite{kawaodp} for nodal del Pezzo threefolds of degree $6$ and $8$ (more details below). In both examples the threefolds have only one ordinary double point (or nodal point). It is an interesting and important question to understand the derived categories of other singular Fano threefolds and see what they look like when the Fano threefolds have multiple nodal points. In this paper we study the bounded derived category of coherent sheaves $\D(X)$ for nodal quintic del Pezzo threefolds $X$ (terminal Gorenstein Fano threefolds of index $2$ and degree $5$) over an algebraically closed field $\kk$ of characteristic $0$ and construct a \textit{Kawamata type semiorthogonal decomposition} (KSOD) defined in \cite[Definition 4.1]{kpsk-1} for Gorenstein projective varieties. This answers the question asked in \cite[Example 4.16]{kpsk-1} about the existence of KSODs for these threefolds. A KSOD is an \textit{admissible} semiorthogonal decomposition (SOD) of the type
\begin{equation}\label{ksod}
    \D(X)=\langle \cA, \D(S_1), \dots, \D(S_n) \rangle
\end{equation}
where $\cA$ is a subcategory of $\DD^{\perf}(X)$, $S_1, \dots, S_n$ are finite dimensional $\kk$-algebras and $\D(S_i), 1\leqslant i\leqslant n$ are bounded derived categories of finitely generated right modules over $S_i$. Here admissible means that the components of the SOD are admissible subcategories.

The classification of terminal Gorenstein del Pezzo threefolds of degree $5$ states that they have only nodal singularities and the number of nodal points is at most $3$. Moreover, the quintic del Pezzo threefold $X_m$ with $m$ nodes ($0\leqslant m\leqslant 3$) exists and is unique up to isomorphism for each $m$; see \cite[Corollary 8.7]{prodp3fold}. The SOD of the derived category $\D(X_0)$ of the smooth quintic del Pezzo threefold is well known; see \cite[Theorem 4.2]{kuz3f}. It has a full exceptional collection of four objects. In this paper, we will focus on the singular threefolds $X_m, 1\leqslant m\leqslant 3$ and show that two of the exceptional objects in the smooth case are replaced by derived categories of finite dimensional algebras. Furthermore, we will see that the construction of the SODs of $\D(X_m)$ is closely related to derived categories of a chain of $m$ $\P^1$'s. For a chain of $\P^1$'s, there is a SOD as below.

\begin{prop}[{\cite{burratsingc}}]\label{chainp1}
    Let $\Gamma$ be a chain of $n+1$ $\P^1$'s. Then $\D(\Gamma) =\langle \D(R_n), \O_{\Gamma} \rangle$. Here $R_0=\kk$ and $R_n, n\geqslant 1$ is the path algebra of quiver with relations
    \begin{equation}\label{rn}
        R_n= \kk \left\{\left.
        \begin{tikzcd}
            \bullet \arrow[bend left]{r}{\alpha_1} & \bullet \arrow[bend left]{l}{\beta_1} \arrow[bend left]{r}{\alpha_2} & \bullet \arrow[bend left]{l}{\beta_2} \arrow[draw=none, yshift=-0.5ex]{r}{\dots} & \bullet \arrow[bend left]{r}{\alpha_n} & \bullet \arrow[bend left]{l}{\beta_n}
        \end{tikzcd}
        \right| 
        \alpha_i\beta_i=0,\, \beta_i\alpha_i=0,\, \forall1\leqslant i\leqslant n
        \right\}.
    \end{equation}
\end{prop}

The main result of the paper is the construction of the following Kawamata type SODs, which indicates that $\D(X_m)$ behave in a similar way as $\D(\Gamma)$.

{\renewcommand{\thethm}{\ref{mainthm}}
\begin{thm}
    Let $X_m$ be the quintic del Pezzo threefolds with $m$ nodes for $m=1,2,3$. Then there is an admissible semiorthogonal decomposition
\begin{equation*}
    \D(X_m)=\langle \D(R_{m-1}), \D(R_1), \O_{X_m}, \O_{X_m}(1)\rangle
\end{equation*}
where $R_0=\kk$ and $R_1, R_2$ are the path algebras defined by (\ref{rn}).
\end{thm}
}

The reason we care about Kawamata type SODs for a singular scheme $X$ is because it gives a complete decomposition of the ``singular part" of $\D(X)$ in terms of finite dimensional algebras. More precisely, the singularity of $\D(X)$ is measured by the Orlov's triangulated categories $\DD_{\sg}(X)$ of singularities, which is defined as the Verdier quotient of $\D(X)$ by the subcategory $\DD^{\perf}(X)$ of perfect complexes. In this sense Kawamata type SOD would induce a SOD of $\DD_{\sg}(X)$ by triangulated categories of singularities of finite dimensional algebras. 

If a Kawamata type SOD does exist, one can further ask whether there exists a full exceptional collection for the subcategory $\cA$ in the SOD (\ref{ksod}). In the main result the answer is yes with $\cA=\langle \O_{X_m}, \O_{X_m}(1)\rangle$. The known examples where $\cA$ has a full exceptional collection is provided in \cite[\S4]{kpsk-1}. In dimension $1$, a chain of projective lines studied by Burban (see Proposition \ref{chainp1}) is one family of such examples. In dimension $2$, Karmazyn-Kuznetsov-Shinder \cite{kks-surf} proved that a projective Gorenstein toric surface has a SOD of this kind if and only if it has the trivial Brauer group. Moreover, their method also provides some non-toric examples: the du Val sextic del Pezzo surfaces \cite{kuzdp6} and the du Val quintic del Pezzo surfaces \cite{xiedp5}. In dimension $3$, Kawamata \cite{kawaodp} provides two such examples, the nodal quadric threefold and the nodal sextic del Pezzo threefold, by investigating the derived category of a threefold with an ordinary double point. More explicitly, Kawamata proved in \cite[Theorem 6.1 and 5.1]{kawaodp} that for certain threefolds $X$ with an ordinary double point, the right (or left) orthogonal complement to $\cA$ in $\D(X)$ is equivalent to $\D(R_1)$ (see (\ref{rn}) for the definition of $R_1$). One notes that $\D(R_1)$ is also the right orthogonal complement to the structure sheaf in the derived category of a chain of two $\P^1$'s by Burban. This phenomena is expected by Kn\"{o}rrer periodicity \cite[Theorem 2.1]{orlsing}. 

For a threefold $X$ with multiple nodal points, one naturally wonders when the SOD (\ref{ksod}) of $\D(X)$ exists and if it does exist, which algebras $S_i$ will appear. It is already known by \cite[Example 3.15 and 4.16]{kpsk-1} (see also \cite[Corollary 3.7]{psksoddp}) that derived categories of nodal del Pezzo threefolds of degree between $1$ and $4$ do not have Kawamata type SODs. The main result of paper confirms that Kawamata type SODs do exist for degree $5$. This is detailed in \S \ref{derivedcat} and it uses a generalization of Kawamata's result \cite[Theorem 6.1 and 5.1]{kawaodp} mentioned before, which is explained in Proposition \ref{kawaprop}, to produce a semiorthogonal component $\D(R_1)$ for threefolds with multiple nodal points. In detail, the derived category $\D(Y_1)$ of a resolution $Y_1$ of $X_1$ has a full exceptional collection and it descends to a SOD of $\D(X_1)$. The non-commutative algebra $R_1$ appearing in the SOD of $\D(X_1)$ is the indication of the node. For $X_2, X_3$ there are partial resolutions $Y_2, Y_3$ of a chosen nodal point, respectively. We can prove that the derived category of a chain of $m$ $\P^1$'s can be embedded into $\D(Y_m)$. This is how algebras $R_1$ and $R_{m-1}$ appear in the SOD of $\D(X_m)$ after descent, where $R_1$ is obtained in the same way as the $1$-nodal case $X_1$ and $R_{m-1}$ is the indication of the remaining nodal points.

It should be pointed out that Kawamata type SODs of $\D(X_m)$ constructed in this paper are certainly not unique. For example, there is another SOD for $\D(X_2)$ where the orthogonal complement of $\langle \O_{X_2}, \O_{X_2}(1)\rangle$ is given by $\langle \D(\kk), \D(R_2)\rangle$; see \cite[Remark 3.13]{psksoddp}. One would probably notice the asymmetry of the subcategory $\langle \D(R_1), \D(R_2)\rangle$ in $\D(X_3)$. The nodal point corresponding to $R_1$ is singled out because it is the chosen point that gets resolved. It is still unknown whether it is possible for $\D(X_3)$ to have $\langle \D(R_1), \D(R_1), \D(R_1) \rangle$ as the orthogonal complement of $\langle \O_{X_3}\rangle$ (instead of $\langle \O_{X_3}, \O_{X_3}(1)\rangle$). On the other hand, if one allows differential graded algebras in the SOD, then the derived category $\D(R_n)$ with $R_n, n\geqslant 1$ defined by (\ref{rn}) has a SOD with a copy of $\D(\kk)$ and $n$ copies of $\D(\kk[\epsilon]/\epsilon^2)$, where $\kk[\epsilon]/\epsilon^2$ is the \textit{derived} dual number with $\deg(\epsilon)=-1$. As a consequence, $\D(X_m)$ has a symmetric SOD with $4$ copies of $\D(\kk)$ and $m$ copies of $\D(\kk[\epsilon]/\epsilon^2)$. Since $X_m$ can be realized as codimension $3$ linear sections of $\Gr(2,5)$ (detailed in \S \ref{geometry}), these symmetric SODs also follow from the Homological Projective Duality theory, where the case for the smooth quintic del Pezzo threefold $X_0$ is done in \cite[\S 6.1]{kuzline}.

The key step of our construction of the Kawamata type SOD, which is interesting by itself, is the study of a quadric surface fibration with fibers of corank $2$. This method has been formalized and generalized in the subsequent paper \cite{xie-qsb}. More explicitly, the partial resolution $Y_3$ of $X_3$ at a nodal point is a quadric surface fibration $p_3\colon Y_3\to \P^1$ over $\P^1$. Because $p_3$ has a fiber of corank $2$, the non-trivial component of $\D(Y_3)$ is not well understood by known results. We show in Proposition \ref{3rdcase} that the non-trivial component is equivalent to the derived category of a chain of three $\P^1$'s. In this paper the proof is obtained by explicit computation. But the essential reason for such an equivalence is that $p_3\colon Y_3\to \P^1$ has a smooth section (each point of the section is a smooth point on the fiber) and a more theoretical proof is given in \cite[Example 6.1]{xie-qsb}.
\subsection{Organization of the paper}
In \S \ref{presec} we provide background materials used in the paper. In \S \ref{cliffordspinor} we review Clifford algebras and spinor sheaves of a quadratic form. In \S \ref{noncomproj} we introduce derived push-forward and pull-back functors for derived categories of noncommutative projective schemes and prove the projection formula (Lemma \ref{projf}). In \S \ref{derivedfacts} we present results on the derived push-forward functor of a proper morphism with fibers of dimension at most $1$ and explain in Proposition \ref{indsod} how to descend SODs via the derived push-forward functor. We also provide a generalization of Kawamata's result in Proposition \ref{kawaprop} that produces a non-trivial semiorthogonal component in the derived category of a projective threefold with multiple nodes.

In \S \ref{geometry} we study the geometry of $X_m$ and realize that there exists a small resolution $Y_m$ of $X_m$ at one of the nodal points and $Y_m$ is a quadric surface fibration over $\P^1$; see Proposition \ref{setup}. Concretely, let $x$ be a nodal point of $X_m$ and let $T_x X_m \cong \P^4\subset \P^6$ be the embedded projective tangent space of $X_m$ at $x$. Then the linear projection $\phi_m\colon X_m\dashrightarrow \P^1$ from $T_x X_m$ factors as $f_m^{-1}\circ p_m$ where $f_m\colon Y_m\to X_m$ is a small resolution of $X_m$ at $x$ and $p_m\colon Y_m\to \P^1$ is a quadric surface fibration.

In \S \ref{derivedcat} we study $\D(X_m)$ using the geometric model constructed in \S\ref{geometry}. The SOD of derived categories of quadric fibrations has been constructed by Kuznetsov \cite{kuzqfib}; see (\ref{ymsod}) for the SOD. The key step to understand $\D(Y_m)$ is to understand the nontrivial component $\D(\P^1, \B_{m,0})$, where $\B_{m,0}$ is the even part of the Clifford algebra of $p_m\colon Y_m \to \P^1$. They are well understood when $m=1,2$ because in these cases, fibers of $p_m$ have corank at most $1$. A new argument is used for $m=3$ where we show in Proposition \ref{3rdcase} that  $\D(\P^1, \B_{3,0})$ is equivalent to the derived category of a chain of three $\P^1$'s. From here we use Proposition \ref{indsod} and \ref{kawaprop} to descend the SOD of $\D(Y_m)$ to a SOD of $\D(X_m)$ along $f_{m*}\colon \D(Y_m) \to \D(X_m)$. One additional important ingredient is that for the exceptional locus $E\cong \P^1$ of $f_m\colon Y_m\to X_m$, its structure sheaf $\O_E$ has a Koszul resolution given by a regular section of the spinor bundle $\cS_E$ associated with $E$ (see Proposition \ref{spinorsheaves} (ii) and note that $E$ is a smooth section of $p_m$).

\subsection{Related work}
In preparation of the paper we learned that Pavic-Shinder \cite{psksoddp} are working on the same subject via a different approach. They study $\D(X_m)$ using a different rational map coming from $X_m$. In detail, they choose a line $L\subset X_m$ in the smooth locus and consider the linear projection $X_m\dashrightarrow \P^4$ from $L$. The image of the map is a smooth or nodal $3$-dimensional quadric $Q^3$ and the map $X_m\dashrightarrow Q^3$ factors as the inverse of the blow-up $Y\cong \Bl_L(X_m)\to X_m$ followed by the blow-up $Y=\Bl_C (Q^3)\to Q^3$ along a nodal curve $C$ of arithmetic genus $0$. In comparison, the rational map $\phi_m\colon X_m\dashrightarrow \P^1$ we used in this paper is a linear projection onto a line in the smooth locus of $X_m$. 

\subsection{Notations}\label{sec-notation}
We will use the following notations and conventions throughout the paper.

Given an algebraic scheme $X$, we denote by $\D(X)$ the bounded derived category of coherent sheaves on $X$. Denote by $\DD^-(X)$, $\DD(X)$ and $\DD^{\perf}(X)$ the bounded above, unbounded derived categories of coherent sheaves and derived categories of perfect complexes (quasi-isomorphic to a bounded complex of locally free sheaves of finite rank), respectively. Given a morphism $f\colon X\to Y$, we denote by $f_*$ and $f^*$ the total derived push-forward and pull-back functors, respectively. The underived push-forward and pull-back functors will be denoted by $R^0f_*$ and $L_0f^*$, respectively. 

Given a noncommutative projective scheme $(X, \cA_X)$ introduced in Definition \ref{noncommdef}, denote by $\Coh(X, \cA_X)$ and $\QCoh(X, \cA_X)$ the abelian categories of, respectively, coherent and quasi-coherent sheaves with right $\cA_X$-module structures. Note that when $\cA_X=\O_X$,  the pair $(X, \cA_X)$ is the usual scheme.

The base field $\kk$ is an algebraically closed field of characteristic $0$.

Let $\Gamma$ be a chain of $n$ $\P^1$'s. Let $\Gamma_i$ be the $i$-th component of $\Gamma$ and $d_i\in\mathbb{Z}, 1\leqslant i\leqslant n$.  Denote by
\begin{itemize}
    \item $\O_{\Gamma}\{d_1,\dots, d_n\}$ the line bundle on $\Gamma$ whose restriction to $\Gamma_i$ is $\O_{\Gamma_i}(d_i)$.
\end{itemize}

We will provide a geometric model for nodal quintic del Pezzo threefolds $X_m$ in Proposition \ref{setup} and it will be used across the paper. For the convenience of the reader we include a summary of related notations below.
\begin{itemize}
    \item $X_m$ is the quintic del Pezzo threefold with $m$ nodes for $m=1, 2, 3$;
    \item $f_m\colon Y_m\to X_m$ is a small resolution of a nodal point where $f_m$ contracts a smooth rational curve $E$ to the nodal point;
    \item $p_m\colon Y_m\to \P^1$ is a quadric surface fibration, where $E$ is a smooth section (consists of smooth points of the fibers) and the normal bundle $N_{E/Y_m}\cong \O_E(-1)^2$;
    \item Let $\E=\O_{\P^1}\oplus \O_{\P^1}(-1)^3$ and $\cL=\O_{\P^1}(-1)$. Let $\pi\colon \P(\E)\to \P^1$ be the projection. Then the total space $Y_m$ of the quadric surface fibration $p_m$ is the zero locus of a global section
    \begin{equation*}
        \sigma_m \in \Gamma(\P(\E), \O_{\P(\E)/\P^1}(2)\otimes \pi^*\cL) \cong \Gamma(\P^1, \Sym^2(\E^\vee) \otimes \cL)
    \end{equation*}
    on $\P(\E)$ where $\E^\vee$ is the dual of $\E$ and $\Sym^2$ is the second symmetric product. The exceptional locus $E$ of $f_m\colon Y_m\to X_m$ is the projectivization of $\O_{\P^1}\subset \E$.
\end{itemize}

Denote by
\begin{itemize}
    \item $i_m\colon Y_m\hookrightarrow \P(\E)$ the embedding ($p_m=\pi\circ i_m$);
    \item $\delta \in \Gamma(\P(\E), \O_{\P(\E)/\P^1}(1)\otimes \pi^*\E) \cong \Gamma(\P^1, \E^\vee\otimes \E)$ the section corresponding to the identity of $\End(\E)$;
    \item $\B_{m,0}$ the sheaf of even part of the Clifford algebra of the quadric fibration $p_m\colon Y_m\to\P^1$;
    \item $\B_{m,1}$ the sheaf of odd part of the Clifford algebra of $p_m$;
    \item $\Z_m$ the central subalgebra $\O_{\P^1}\oplus \Lambda^4\E \otimes (\cL^2)^\vee$ of $\B_{m,0}$;
    \item $g_m\colon C_m=\Spec_{\P^1}(\Z_m) \to \P^1$ the double cover ramified along the degeneration locus of $p_m$;
    \item $\wt{\B_{m, 0}}$ the unique sheaf of algebra over $C_m$ such that $g_{m*}(\wt{\B_{m,0}}) \cong \B_{m,0}$ as sheaves of $\Z_m$-algebras.
\end{itemize}

Note that the algebra structures of coherent sheaves $\B_{m,0}$, $\B_{m,1}$, $\wt{\B_{m,0}}$, $\Z_m$, the degeneration locus of $p_m\colon Y_m\to \P^1$ and therefore the map $g_m\colon C_m\to \P^1$ depend on the global section $\sigma_m$ defining  the quadric surface fibration $p_m$. 

\subsection{Acknowledgements}
I would like to thank Arend Bayer, Evgeny Shinder, Nebojsa Pavic for many helpful conversations. I am also grateful to Pavic and Shinder for communicating their ongoing work with me. I would also like to thank Alexander Kuznetsov for pointing out that the linear subspace $\P^4$ that $\phi_m\colon X_m\dashrightarrow \P^1$ is projecting from is the embedded projective tangent space of a chosen nodal point. Lastly, I would like to thank Arend Bayer and referees for the careful reading of the earlier draft and for suggestions of revision. The author is supported by the ERC Consolidator grant WallCrossAG, no. 819864.
\section{Preliminaries}\label{presec}
\subsection{Clifford algebras and spinor sheaves}\label{cliffordspinor}
In this section we review the Clifford algebra of a quadric hypersurface and spinor sheaves associated with linear subspaces of a quadric hypersurface, with a focus on quadric surfaces. Spinor sheaves on singular quadrics are studied by Addington in \cite{addspinor}. He follows the convention that spinor bundles on smooth quadrics are generated by global sections; e.g., the spinor bundle on $\P^1$ is $\O(1)$. Similarly, spinor sheaves of a quadric fibration can be constructed and they are used to study derived categories of nodal quintic del Pezzo threefolds in \S \ref{derivedcat}.

Let $V$ be a $\kk$-vector space and let $q$ be a quadratic form on $V$. The Clifford algebra of $q$ is
\begin{equation*}
    B_q=T^{\bullet}(V)/ \langle v\otimes v-q(v)\cdot 1\rangle_{v\in V}
\end{equation*}
where $T^\bullet (V)$ is the free associative algebra generated by $V$. We view $T^\bullet (V)$ as a graded algebra with elements of $V$ having degree $1$. Then $B_q$ has a natural $\mathbb{Z}/2$-grading: the even part $B_{q0}$ is spanned by monomials of even degrees and the odd part $B_{q1}$ is spanned by monomials of odd degrees.

Let $W$ be an isotropic subspace of $V$, that is, $q|_W=0$. Then the subalgebra of $B_q$ generated by $W$ is $\Lambda^\bullet W$. Let $I^W$ be the right ideal $(\Lambda^{\dim W} W) \cdot B_q$ of $B_q$ generated by $\Lambda^{\dim W} W$. Let $I^W=I^W_0\oplus I^W_1$ be the decomposition into the even part $I^W_0$ and the odd part $I^W_1$. Consider the map of vector bundles
\begin{equation*}
    \begin{array}{rcl}
        \O_{\P(V)}(-1)\otimes I^{W} & \xrightarrow{{\delta^W}} & \O_{\P(V)}\otimes I^{W}\\
        v\otimes \xi & \mapsto & 1\otimes \xi v\\
    \end{array}
\end{equation*}
where $\O_{\P(V)}(-1)$ is regarded as the universal subbundle of $\O_{\P(V)}\otimes V$. We use $\delta^W(n)$ to denote the map obtained by tensoring $\delta^W$ with $\O_{\P(V)}(n)$. Since $\delta^W(n+1)\circ \delta^W(n)=q$ for every $n$, we have $\delta^W$ is an isomorphism away from the quadric $Q=\{q=0\} \subset \P(V)$. Because $\ker{(\delta^W)}$ is torsion and $\O_{\P(V)}(-1)\otimes I_{W}$ is torsion free, we have $\delta^W$ is injective. Hence, there are short exact sequences
\begin{equation}\label{spinor}
    \begin{split}
        & 0\to \O_{\P(V)}(-1)\otimes I^W_0 \xrightarrow{\delta^W_0} \O_{\P(V)}\otimes I^W_1 \to S_W\to 0,\\
        & 0\to \O_{\P(V)}(-1)\otimes I^W_1 \xrightarrow{\delta^W_1} \O_{\P(V)}\otimes I^W_0 \to T_W \to 0.\\
    \end{split}
\end{equation}
where $\delta^W=\delta^W_0 \oplus \delta^W_1$ and $S_W\coloneqq\coker{(\delta^W_0)}, T_W\coloneqq\coker{(\delta^W_1)}$ are supported on $Q$. The map $\delta^W_1$ can be induced from $\delta^W_0$ because
\begin{equation*}
    \delta^W_1 \cong \delta^W_0\otimes_{B_{q0}} B_{q1}.
\end{equation*}
In particular, $I^W_1 \cong I^W_0\otimes_{B_{q0}} B_{q1}$. There are resolutions on $Q$ for $S_W$ (and similarly for $T_W$):
\begin{equation}\label{spinorres}
    \begin{split}
        \cdots \xrightarrow{\delta^W_0(-2)} \O_Q(-2) \otimes I^W_1 \xrightarrow{\delta^W_1(-1)} \O_Q(-1) \otimes I^W_0 \xrightarrow{\delta^W_0} \O_Q\otimes I^W_1 \to S_{W} \to 0,\\
        0\to S_{W}\to \O_Q(1)\otimes I^W_0 \xrightarrow{\delta^W_0(2)} \O_Q(2) \otimes I^W_1 \xrightarrow{\delta^W_1(3)} \O_Q(3)\otimes I^W_0 \xrightarrow{\delta^W_0(4)} \cdots.\\
    \end{split}
\end{equation}
The sheaves $S_{W}, T_{W}$ constructed here are called the \textit{spinor sheaves} of $Q$ associated with the linear subspace $\P(W)\subset Q$. They are the generalization of spinor bundles on smooth quadrics. Similarly for a flat quadric fibration $\cQ \subset \P(\cV) \to S$ and an isotropic subbundle $\mathcal{W}\subset \cV$, one can construct spinor sheaves $\cS_{\mathcal{W}}, \mathcal{T}_{\mathcal{W}}$ of $\cQ$ associated with $\P(\mathcal{W})\subset \cQ$. For details of this construction, see \cite[\S2.2]{xie-qsb}.

\begin{prop}[{\cite[Proposition 2.1 and 4.1]{addspinor}}]\label{spinorprop}
    The spinor sheaves $S_{W}, T_{W}$ constructed by (\ref{spinor}) are reflexive sheaves on $Q$. Let $K\subset V$ be the kernel of $q$; i.e., the singular locus of $Q$ is $\P(K)$. We have

    (i) if $\P(K)\cap \P(W)=\emptyset$ and $\codim(W)>1$, then $S_{W}, T_{W}$ are locally free sheaves of rank $2^{\codim(W)-2}$ on $Q$;

    (ii) if $\codim(W)$ is odd, then $S_{W}^\vee\cong S_{W}(-1)$ and $T_W^\vee\cong T_W(-1)$;

    (iii) if $\codim(W)$ is even, then $S_{W}^\vee\cong T_{W}(-1)$.
\end{prop}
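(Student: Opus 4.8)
The plan is to view $S_W$ and $T_W$ as the sheaves underlying (graded) matrix factorizations of $q$ on the hypersurface $Q$, and to argue throughout with maximal Cohen--Macaulay (MCM) modules. For reflexivity, note that the sequences (\ref{spinor}) present $S_W$ and $T_W$ as cokernels of \emph{injective} maps of vector bundles on $\P(V)$; each therefore has projective dimension $\leqslant 1$ over $\O_{\P(V)}$, with equality exactly along $Q$ (where $\delta^W$ is not an isomorphism). By the Auslander--Buchsbaum formula, $\operatorname{depth}(S_W)_x = \dim\O_{\P(V),x} - 1 = \dim\O_{Q,x}$ for every $x\in Q$, so $S_W$ (and likewise $T_W$) is MCM on $Q$; since $Q$ is a hypersurface, hence Gorenstein, an MCM sheaf on $Q$ is reflexive. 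This proves the first assertion.

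For (i), write $S_W = \coker\big(\delta^W_0|_Q\colon \O_Q(-1)\otimes I^W_0\to\O_Q\otimes I^W_1\big)$; it suffices to show that $\delta^W_0|_Q$ has locally constant rank on $Q$, for then its image is a subbundle and $S_W$ is locally free. Its rank equals $2^{\codim W-2}$ on the smooth locus, where $S_W$ is MCM over a regular ring, hence free of that rank by the usual count; the content is the singular locus $\Sing Q = \P(K)$, and this is where the hypotheses enter (the condition $\codim W>1$ also being what makes the formula $2^{\codim W-2}$ meaningful). Using $\P(K)\cap\P(W)=\emptyset$, i.e. $K\cap W = 0$, I would choose a $q$-orthogonal decomposition $V = V'\perp K$ with $W\subseteq V'$ and $q|_{V'} = q'$ nondegenerate; then $B_q\cong B_{q'}\otimes\Lambda^\bullet K$ and correspondingly $I^W\cong I^W_{q'}\otimes\Lambda^\bullet K$, where $I^W_{q'} = (\Lambda^{\dim W}W)B_{q'}$. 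At a point $x_0\in\P(K)$, whose tautological fibre is a line $\kk v_0\subseteq K$, the map $\delta^W_0|_{x_0}$ becomes $\pm\,\id_{I^W_{q'}}\otimes(v_0\wedge -)$ on $I^W_{q'}\otimes\Lambda^\bullet K$, and a direct computation of the rank of $v_0\wedge -$ (keeping track of the $\mathbb{Z}/2$-grading) gives $\operatorname{rank}(\delta^W_0|_{x_0}) = 2^{\codim W-2}$, the same as the generic rank. Hence $\delta^W_0|_Q$ has constant rank and $S_W$ is locally free of rank $2^{\codim W-2}$; the same argument handles $T_W$.

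For (ii) and (iii), apply $\sHom_{\O_{\P(V)}}(-,\O_{\P(V)})$ to the first sequence in (\ref{spinor}). Because $S_W$ is MCM of codimension one, $\sHom_{\P(V)}(S_W,\O_{\P(V)}) = 0$ and, via the normal bundle of $Q$, $\mathscr{E}xt^1_{\P(V)}(S_W,\O_{\P(V)})\cong S_W^{*}\otimes\O_Q(2)$; on the other hand the resolution computes $\mathscr{E}xt^1_{\P(V)}(S_W,\O_{\P(V)})$ as $\coker\!\big((\delta^W_0)^{\vee}\big)(1)$, the cokernel of the transposed matrix factorization twisted by $\O(1)$. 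It remains to recognize the transposed factorization, and for this I would use a canonical pairing on $I^W$: writing $I^W = \omega B_q$ for a generator $\omega\in\Lambda^{\dim W}W$, the form $\langle \omega a,\omega b\rangle := \lambda\big(\tau(a)\,\omega\, b\big)$ --- with $\tau$ the reversal anti-involution of $B_q$ and $\lambda$ the projection onto the top piece $\Lambda^{\dim V}V\subset B_q$ --- is well defined (since $\omega c = 0$ forces $\tau(c)\omega = 0$), nondegenerate (since $\lambda$ is a Frobenius form and $\tau$ an anti-automorphism), and invariant under right multiplication by $V$ up to sign. The key point is that $\lambda(\tau(a)\omega b)$ can be nonzero only when $\deg a + \deg b\equiv\dim V - \dim W = \codim W\pmod 2$, so the induced pairing $I^W_i\times I^W_j\to\kk$ is nondegenerate exactly when $i+j\equiv\codim W\pmod 2$. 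When $\codim W$ is odd this identifies $(I^W_0)^{*}\cong I^W_1$ and $(I^W_1)^{*}\cong I^W_0$ and, by the $V$-invariance, carries the transpose of $\delta^W_0$ back to $\delta^W_0$; hence $S_W^{*}\otimes\O_Q(2)\cong S_W(1)$, i.e. $S_W^{*}\cong S_W(-1)$, and symmetrically $T_W^{*}\cong T_W(-1)$. When $\codim W$ is even it identifies $(I^W_i)^{*}\cong I^W_i$ and carries the transpose of $\delta^W_0$ to $\delta^W_1$; hence $S_W^{*}\otimes\O_Q(2)\cong T_W(1)$, i.e. $S_W^{*}\cong T_W(-1)$.

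I expect the main obstacle to be the Clifford-algebra bookkeeping rather than the homological framework, which is routine. In (i) the delicate point is verifying that the rank of $\delta^W_0$ along $\P(K)$ coincides with the generic rank --- equivalently that cancelling the ``$\Lambda^\bullet K$-part'' of the matrix factorization preserves local freeness --- which is precisely where the hypothesis $K\cap W = 0$ is used and where the power of $2$ must be tracked through the grading. In (ii)--(iii) the delicate point is pinning down the invariant pairing on the ideals $I^W$, checking its nondegeneracy and $V$-invariance with the correct signs, and confirming that its parity is controlled by $\codim W$ (and not by $\dim W$ or $\dim V$ separately). Both are elementary but sign- and index-sensitive, and this is where the detailed arguments of \cite{addspinor} do the work.
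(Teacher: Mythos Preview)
The paper does not give its own proof of this proposition: it is stated with the attribution \cite[Proposition~2.1,~4.1]{addspinor} and no proof environment follows. So there is nothing in the present paper to compare your argument against line by line.

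That said, your proposal is a faithful reconstruction of the arguments in \cite{addspinor}. The reflexivity via Auslander--Buchsbaum (projective dimension $1$ on $\P(V)$, hence MCM on the hypersurface $Q$, hence reflexive since $Q$ is Gorenstein) is exactly the standard route. For (i), your plan to split $V=V'\perp K$ with $W\subset V'$ and reduce the fibre of $\delta^W_0$ at a point of $\P(K)$ to $\id_{I^W_{q'}}\otimes(v_0\wedge -)$ on the $\Lambda^\bullet K$-factor is the same mechanism Addington uses; the hypothesis $K\cap W=0$ is precisely what makes this splitting available and what keeps the rank from dropping along $\P(K)$. For (ii)--(iii), your pairing $\langle\omega a,\omega b\rangle=\lambda(\tau(a)\,\omega\,b)$ built from the reversal anti-involution and the Frobenius form is again Addington's device, and your parity count $\deg a+\deg b\equiv\dim V-\dim W=\codim W\pmod 2$ correctly identifies which of $S_W$ or $T_W$ appears in the dual.

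The only places to be careful are exactly the ones you flag: in (i), matching the rank of $\delta^W_0$ at points of $\P(K)$ with the generic rank $2^{\codim W-2}$ requires tracking the $\mathbb{Z}/2$-grading on $I^W_{q'}\otimes\Lambda^\bullet K$ (noting $\dim I^W_{q'}=2^{\dim V'-\dim W}$), and in (ii)--(iii) one must check that the pairing intertwines right multiplication by $v\in V$ on the two factors with the correct sign so that the transpose of $\delta^W_0$ really is $\delta^W_0$ or $\delta^W_1$ as claimed. These are bookkeeping issues rather than gaps.
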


The sheaves $S_W, T_W$ are unchanged when we vary $W$ continuously with $W\cap K$ fixed. They are not isomorphic when the family of $W$ with fixed $\dim W$ and $W\cap K$ is not connected. Let $\pi\colon V\to V/K$ be the projection map. Then we have $S_W\ncong T_W$ when $\dim \pi(W)= \frac{1}{2}\dim V/K$ because there are two connected families of $W$, and $S_W\cong T_W$ when $\dim \pi(W)< \frac{1}{2} \dim V/K$.

For the rest of the section we will focus on $\dim V=4$.

\begin{prop}\label{spinorsheaves}
    (i) Let $Q=\{q=0\}\subset \P(V)\cong \P^3$ be a reduced quadric surface; i.e., it has corank at most $2$. For a smooth point $x\in Q$, let $S_x$ be the spinor sheaf for the corresponding $1$-dimension isotropic subspace constructed by (\ref{spinor}). Then $S_x$ is a rank $2$ vector bundle on $Q$ such that $S_x^\vee \cong S_x(-1)$ and the skyscraper sheaf $\O_x$ has a resolution:
    \begin{equation}\label{koszulres}
        0\to \det (S_x(-1)) \to S_x(-1) \to \O_Q \to \O_x\to 0.
    \end{equation}

    (ii) Let $p\colon \cQ\to S$ be a flat quadric surface fibration with fibers of corank at most $2$ and let $q\colon \cV\to \sL$ be the corresponding quadratic form. Assume that the base scheme $S$ is Cohen-Macaulay and there exists an isotropic sub line bundle $\cN\subset \cV$ such that $F=\P(\cN)\subset \cQ$ is a \textit{smooth} section; i.e., $F$ consists of smooth points on the fibers. Then the spinor sheaf $\cS_F$ associated with $F$ is a rank $2$ vector bundle on $\cQ$ such that
    \begin{equation*}
        \cS_F^\vee \cong \cS_F \otimes \O_{\cQ/S}(-1)\otimes p^*(\cN^\vee\otimes \det(\cV^\vee)\otimes \sL)
    \end{equation*}
     and there is a Koszul resolution
    \begin{equation*}
        0\to \det(\cS_F^\vee) \to \cS_F^\vee \to \O_{\cQ}\to \O_F\to 0.
    \end{equation*}
\end{prop}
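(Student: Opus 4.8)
The plan is to establish (i) directly and then deduce (ii) as its relative version over $S$.

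\smallskip
\textit{Part (i).} Let $W\subset V$ be the one-dimensional (necessarily isotropic) subspace with $\P(W)=x$. Since $x$ is a smooth point of $Q$ we have $\P(W)\cap\P(K)=\emptyset$, and $\codim(W)=3>1$; Proposition \ref{spinorprop}(i) therefore makes the spinor sheaf $S_W$ of (\ref{spinor}) locally free of rank $2^{3-2}=2$ on $Q$, and part (ii) of that proposition (with $\codim(W)=3$ odd) gives $S_W^{*}\cong S_W(-1)$. I set $S_x:=S_W$. Taking determinants in $S_x^{*}\cong S_x(-1)$ shows $(\det S_x)^{\otimes 2}\cong\O_Q(2)$; since the unique square root of $\O_Q(2)$ in $\operatorname{Pic}(Q)$ is $\O_Q(1)$ for each of the three types of corank-$\le 2$ quadric surface, $\det S_x\cong\O_Q(1)$ and hence $\det(S_x(-1))\cong\O_Q(-1)$, so (\ref{koszulres}) is forced to be the complex $0\to\O_Q(-1)\to S_x(-1)\to\O_Q\to\O_x\to 0$.

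\smallskip
The remaining content of (i) is to produce this complex, that is, a section $s\in H^0(Q,S_x)$ whose zero scheme is the reduced point $x$: the Koszul complex of such a section is automatically (\ref{koszulres}), because $\operatorname{rk}S_x=2=\codim(x)$. A distinguished such $s$ comes from the spinor complex (\ref{spinorres}), namely the image under the surjection $\O_Q\otimes I^W_1\twoheadrightarrow S_W$ of the element obtained by Clifford multiplication by a generator $w$ of $W$ (using $I^W=W\cdot B_q$); the one thing to check is that $Z(s)=x$. I would verify this over the three isomorphism types of a corank-$\le 2$ quadric surface in $\P^3$: on the smooth quadric $Q\cong\P^1\times\P^1$ one has $S_x\cong\O(1,0)\oplus\O(0,1)$ and $s$ corresponds to the two rulings through $x$, whose scheme-theoretic intersection is $x$; on the cone over a smooth conic, and on a union of two planes (where $x$ lies on a single plane and $S_x$ restricts there to a twist of the tangent bundle of $\P^2$), the analogous spinor-sheaf description again forces $s$ to have its unique, multiplicity-one zero at $x$. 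Equivalently, $Z(s)=x$ can be read off directly from the relations of $B_q$ together with $q(w)=0$ and the nondegeneracy of $b(w,-)$.

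\smallskip
\textit{Part (ii).} I take $\cS_F$ to be the relative spinor sheaf of $\cQ\to S$ attached to the isotropic sub-line-bundle $\cN$, built verbatim from the relative analogue of (\ref{spinor}) with $(\cV,\cL,\cN)$ in place of $(V,\kk,W)$. Because $F=\P(\cN)$ is a \emph{smooth} section, every fibre $F_s$ avoids $\Sing(\cQ_s)$ and each $\cQ_s$ has corank $\le 2$; hence $\cS_F$ restricts on every fibre $\cQ_s$ to the rank-$2$ locally free sheaf $S_x$ of (i). A fibrewise-flatness argument --- $\cQ\to S$ is flat, $S$ (hence $\cQ$) is Cohen--Macaulay, and $\cS_F|_{\cQ_s}$ is locally free of constant rank $2$ --- then upgrades $\cS_F$ to a rank-$2$ vector bundle on $\cQ$. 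The duality $\cS_F^{*}\cong\cS_F(-1)\otimes\cN^{*}\otimes\det(\cV^{*})\otimes\cL$ is the relative form of Proposition \ref{spinorprop}(ii)--(iii): one reruns that duality argument while tracking the line bundles $\cN$, $\cV$, $\cL$ entering the relative Clifford algebra, and the resulting twist is also pinned down by comparing $\det\cS_F^{*}$ with the relative canonical bundle of $\cQ/S$. Finally, the inclusion $\cN\hookrightarrow\cV$ induces, through the relative Clifford-module structure, a section $s\in H^0(\cQ,\cS_F)$ which restricts on each fibre to the section of (i); by (i) this $s$ is a regular section with zero scheme $F$, and since $\cQ$ is Cohen--Macaulay the Koszul complex of $s$ is exact, which is precisely $0\to\det(\cS_F^{*})\to\cS_F^{*}\to\O_\cQ\to\O_F\to 0$.

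\smallskip
The step I expect to be the main obstacle is the singular-quadric analysis: checking in (i) that the distinguished section cuts out $x$ scheme-theoretically with multiplicity one on the union-of-two-planes quadric (which is non-normal, and over which $S_x$ is not a direct sum of line bundles), and then carrying this through in (ii) over the locus of $S$ with corank-$2$ fibres while $S$ is only assumed Cohen--Macaulay. Identifying the precise twist in the formula for $\cS_F^{*}$ is a secondary but necessary bookkeeping computation.
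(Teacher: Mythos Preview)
Your approach is the paper's. Two execution notes, both of which dissolve the obstacle you flag at the end:

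For the zero-locus check in (i), the paper avoids any case analysis over the three quadric types. It chooses an orthogonal basis $\{v_i\}_{i=1}^4$ of $V$ for $q$ with $v_1=v$, and observes that the composite
\[
\O_Q v \hookrightarrow \O_Q\otimes I^W_1 \twoheadrightarrow S_x \hookrightarrow \O_Q(1)\otimes I^W_0
\]
is right multiplication by $v$, hence given by the column $(v_2,v_3,v_4,0)$ (since $v_1v=0$ and $v_iv=-vv_i$ for $i\ge 2$). Thus $Z(s_v)=\{v_2=v_3=v_4=0\}=\{x\}$ uniformly, with no separate treatment of the corank-$2$ fibre. This argument is coordinate-level and canonical in $v$, so it relativizes verbatim for (ii).

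For the duality twist in (ii), the paper does not appeal to $\omega_{\cQ/S}$ but writes out the Clifford ideals explicitly,
\[
\cI_0=\cN\otimes(\cV/\cN)\otimes\cL^{-1}\oplus\det\cV\otimes\cL^{-2},\qquad \cI_1=\cN\oplus\cN\otimes\Lambda^2(\cV/\cN)\otimes\cL^{-1},
\]
checks directly that $\cI_n^*\cong\cI_{1-n}^\circ\otimes\cN^*\otimes\det\cV^*\otimes\cL^2$, and then dualizes the right-hand resolution $0\to\cS_F\to\O_\cQ(1)\otimes\cI_2\to\cdots$ to read off $\cS_F^*=\cS_F(-1)\otimes\cN^*\otimes\det\cV^*\otimes\cL$. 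This is the bookkeeping you anticipate, done concretely.
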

\begin{proof}
    (i) Let $W=\kk v$ be the $1$-dimensional isotropic subspace of $V$ that corresponds to $x$. Then $S_x=S_{W}$. By Proposition \ref{spinorprop} $S_x$ is a rank $2$ vector bundle on $Q$ and $S_x^\vee\cong S_x(-1)$. Explicitly, we have the following three cases.
    \begin{enumerate}
        \item $Q$ is smooth: $Q\cong\P^1\times\P^1$ and $S_x\cong \O_{\P^1\times\P^1}(1,0) \oplus \O_{\P^1\times\P^1}(0,1)$.
        \item $Q$ is of corank $1$: $Q$ is the cone over a smooth conic $C\cong\P^1$. Let $y\in Q$ be the vertex. Let $\pi_y\colon Q-\{y\} \to C$ be the projection and let $j_y\colon Q-\{y\} \hookrightarrow Q$ be the open embedding. Denote the rank $1$ spinor sheaf $R^0j_{y*}\pi_y^* \O_{\P^1}(1)$ on $Q$ by $M$. Then $S_x$ is the unique nontrivial extension of $M$ by $M$; cf. \cite[Example 5.5]{kawamulti}.
        \item $Q$ is of corank $2$: $Q$ is the union of two $\P^2$'s intersecting along $\P^1$. Since $x$ is a smooth point, $x$ is not a point on the intersection $\P^1$. Denote the $\P^2$ containing $x$ by $\P^2_1$ and the $\P^2$ not containing $x$ by $\P^2_2$. Denote by $i_l\colon \P^2_l\hookrightarrow Q$ the inclusions for $l=1,2$. Then using the sequences (\ref{spinor}) defining spinor sheaves, one gets $L_0i_1^*S_x\cong T_{\P^2_1}(-1)$ where $T_{\P^2_1}$ is the tangent bundle and $L_0i_2^*S_x \cong \O_{\P^2_2}\oplus \O_{\P^2_2}(1)$.
    \end{enumerate}
    Let $\{v_i\}_{i=1}^4$ be an orthogonal basis of $V$ for $q$ such that $v_1=v$. Let $s_v\in H^0(Q, S_x)$ be the section corresponding to the map $\O_Q v \subset \O_Q \otimes I^W_1\to S_{W}=S_x$ from (\ref{spinorres}). Note that the composition $\O_Qv\to S_x \to \O_Q(1)\otimes I^W_0$ is given by the column vector $(v_2, v_3, v_4, 0)$. Thus, the zero locus of $s_v$ is $\{v_2=v_3=v_4=0\}=\{x\}$. Because both the codimension of the point $x$ and the rank of $S_x$ are $2$, this implies that $s_v$ is a regular section and the skyscraper sheaf $\O_x$ has the Kozsul resolution (\ref{koszulres}).

    (ii) Let $\B_{q0}$ and $\B_{q1}$ be the even and odd Clifford algebras of $q$, respectively. Let $\cI_0\subset \B_{q0}$, $\cI_1\subset \B_{q1}$ be right modules over $\B_{q0}$ generated by $\cN$. We have decompositions
    \begin{equation*}
        \B_{q0} \cong \O_S \oplus \Lambda^2 \cV \otimes \sL^\vee \oplus \det \cV \otimes (\sL^2)^\vee, \quad \B_{q1} \cong \cV \oplus \Lambda^3 \cV \otimes \sL^\vee.
    \end{equation*}
    Taking the parts that have the factor $\cN$ gives the decompositions
    \begin{equation*}
        \cI_0 \cong \cN\otimes \cV/\cN\otimes \sL^\vee \oplus \det \cV\otimes (\sL^2)^\vee, \quad \cI_1 \cong \cN \oplus \cN\otimes\Lambda^2 (\cV/\cN)\otimes \sL^\vee.
    \end{equation*}
    Moreover, one has
    \begin{equation}\label{dual} 
        \cI_n^\vee \cong \cI_{1-n}^\circ \otimes \cN^\vee\otimes \det \cV^\vee\otimes \sL^2
    \end{equation} 
    for $n=0,1$ where $\cI_n^\circ$ are the left modules over $\B_{q0}$ generated by $\cN$ and $\cI_n^\circ\cong \cI_n$ as $\O_S$-modules. For $n\in\mathbb{Z}$ let 
    \begin{equation*}
        \cI_n\coloneqq\left\{
        \begin{array}{ll}
            \cI_0\otimes \sL^k, & n=2k,\\
            \cI_1\otimes \sL^k, & n=2k+1.
        \end{array}
         \right.
    \end{equation*}
    The relative version of (\ref{spinorres}) reads as
    \begin{equation*}
        \begin{split}
        & \dots \to \O_{\cQ/S}(-1)\otimes p^*\cI_0\to p^* \cI_1\to \cS_F\to 0,\\
        & 0\to \cS_F\to \O_{\cQ/S}(1)\otimes p^*\cI_2\to \O_{\cQ/S}(2)\otimes p^*\cI_3\to \cdots .
        \end{split}
    \end{equation*}
    Taking the dual of the second sequence, one gets
    \begin{equation*}
        \dots \to \O_{\cQ/S}(-2)\otimes p^*\cI_3^\vee\to \O_{\cQ/S}(-1)\otimes p^*\cI_2^\vee\to \cS_F^\vee\to 0.
    \end{equation*}
    Since $\cI_3^\vee \cong \cI_1^\vee\otimes \sL^\vee$ and $\cI_2^\vee \cong \cI_0^\vee\otimes \sL^\vee$, combining with (\ref{dual}) we have $\cS_F^\vee \cong \cS_F \otimes \O_{\cQ/S}(-1)\otimes p^*(\cN^\vee\otimes \det(\cV^\vee)\otimes \sL)$.

    The proof for the Koszul resolution is the same to that of (i). Note that the choice of the regular section $s_v\in H^0(Q, S_x)$ is canonical and thus the argument can be extended to the fibration case.
\end{proof}

Lastly, we include the following computation of even Clifford algebras for later reference.

\begin{ex}\label{qfex}
    Let $q(x)=x_1x_2+q'(x_3,x_4)$ be a quadratic form of $V$ with $\dim V=4$. Then $q$ is the direct sum of the hyperbolic quadratic form $u=x_1x_2$ and $q'$. We get
    \begin{equation*}
        B_q\cong B_u\otimes B_{q'} \cong \M_2(B_{q'}), \quad B_{q0} \cong B_{u0}\otimes B_{q'0}\oplus B_{u1}\otimes B_{q'1}
    \end{equation*}
    where $M_2$ is the matrix algebra of size $2$.

    Up to a change of variables $q'(x_3, x_4)$ is of the form $\lambda x_3^2+ \mu x_4^2$ for some $\lambda, \mu\in \kk$. The symmetric matrix corresponding to $q$ is
    \begin{equation*}
        \left(
        \begin{array}{cc}
            \left(\begin{array}{cc} 0 &  1\\ 1 & 0\end{array}\right) &  0\\
            0 & \left(\begin{array}{cc} 2\lambda &  0\\ 0 & 2\mu\end{array}\right)
        \end{array}
        \right).
    \end{equation*}
    Let $\{v_1, v_2, e_3, e_4\}$ be a dual basis of $\{x_i\}_{i=1}^4$. Then we have
    \begin{equation*}
        v_1^2=v_2^2=0, \quad v_1v_2+v_2v_1=1, \quad v_ie_j=-e_jv_i, \quad i\in \{1,2\},\;  j\in \{3,4\} \quad \text{in} \; B_q.
    \end{equation*}
    If we set $e_1=v_1+v_2$ and $e_2=v_2-v_1$, then $\{e_i\}_{i=1}^4$ is an orthogonal basis of $V$ for $q$; i.e.,
    \begin{equation*}
        e_i e_j=-e_j e_i, \quad i\neq j, \quad e_i^2=q(e_i) \quad \text{in} \; B_q.
    \end{equation*}
    Moreover, we have $q(e_1)=1$ and $q(e_2)=-1$.  

    Let $d=e_1e_2e_3e_4\in B_q$. Then $Z_q \coloneqq\kk 1 \oplus \kk d \cong \kk[d]/(d^2-\det(q))$ is a central subalgebra of $B_{q0}$. If $q'(x_3,x_4)=0$, then 
    \begin{equation}\label{corank2}
        \begin{split}
            B_{q0} & \cong Z_qv_1v_2\oplus Z_qv_2v_1 \oplus Z_q v_1e_3 \oplus Z_q v_1e_4 \oplus Z_q v_2e_3 \oplus Z_q v_2e_4\\
            & \cong Z_qv_1v_2\oplus Z_qv_2v_1 \oplus \kk v_1e_3 \oplus \kk v_1e_4 \oplus \kk v_2e_3 \oplus \kk v_2e_4.
        \end{split}
    \end{equation}
    If $q'(x_3, x_4)\neq 0$, then there exists $v_3\in \kk e_3\oplus \kk e_4$ such that $q(v_3)=1$. In this case, 
    \begin{equation*}
        B_{q0} \cong \frac{Z_q \langle s,t \rangle}{\langle s^2=1, \, t^2=1, \, st=-ts \rangle}
    \end{equation*} 
    where  $s=e_1e_2, \, t=e_2v_3$.
    Morever, $B_{q0} \cong \M_2(Z_q)$ and an explicit identification is given by
    \begin{equation*}
        s=
        \left(
        \begin{array}{lr}
            1 & 0\\
            0 & -1 \\
        \end{array}
        \right)
        ,\quad
        t=
        \left(
        \begin{array}{lr}
            0 & 1\\
            1 & 0 \\
        \end{array}
        \right).
    \end{equation*}
    Under this identification we have
    \begin{equation}\label{matrix}
        \begin{split}
            & \left(
            \begin{array}{lr}
                1 & 0\\
                0 & 0 \\
            \end{array}
            \right)
            =\frac{s+1}{2}=v_1v_2,
            \quad
            \left(
            \begin{array}{lr}
            0 & 0\\
            0 & 1 \\
            \end{array}
            \right)
            =1-v_1v_2=v_2v_1,\\
            & \left(
            \begin{array}{lr}
                0 & 1\\
                0 & 0 \\
            \end{array}
            \right)
            =\frac{t+st}{2}=-v_1v_3,  
            \quad
            \left(
            \begin{array}{lr}
                0 & 0\\
                1 & 0 \\
            \end{array}
            \right)
            =\frac{t-st}{2}=v_2v_3.\\
        \end{split}
    \end{equation}
    Furthermore,
    \begin{equation}\label{actionbycenter}
        \begin{split}
            & Z_q v_1v_2=\kk v_1v_2\oplus \kk v_1v_2e_3e_4, \quad Z_q v_2v_1=\kk v_2v_1\oplus \kk v_2v_1e_3e_4,\\
            & Z_q v_1v_3=\kk v_1e_3\oplus \kk v_1e_4, \quad Z_q v_2v_3=\kk v_2e_3\oplus \kk v_2e_4.
        \end{split}
    \end{equation}
\end{ex}
\subsection{Noncommutative projective schemes}\label{noncomproj}
In this section we introduce noncommutative projective schemes in the sense of pairs of projective schemes together with sheaves of algebras. We will define the derived push-forward and pull-back functors of a morphism and prove the projection formula (Lemma \ref{projf}) in this setting. The generalization to noncommutative schemes in the sense of pairs of noetherian schemes together with sheaves of algebras is given in Appendix A of \cite{xie-qsb}. 

\begin{defn}\label{noncommdef}
    A pair $(X, \cA_X)$ is a \textit{noncommutative projective scheme} over $\kk$ if $X$ is a projective scheme over $\kk$ and $\cA_X$ is a quasi-coherent $\O_X$-module and sheaf of $\O_X$-algebras. A \textit{morphism} $\Theta=(\theta, \theta_{\cA})\colon (X, \cA_X)\to (Y, \cA_Y)$ of noncommutative projective schemes over $\kk$ consists of a morphism $\theta\colon X\to Y$ of schemes and a homomorphism $\theta_{\cA}\colon L_0\theta^*\cA_Y\to \cA_X$ of $\O_X$-algebras. 
\end{defn}
Following the same lines after Definition 10.3 in \cite{kuzhs}, there is the derived functor
\begin{equation*}
    \Theta_*=\theta_*\colon \DD(\QCoh(X,\cA_X))\to \DD(\QCoh(Y, \cA_Y)).
\end{equation*}

For every $G\in \QCoh(Y, \cA_Y)$, 
\begin{equation*}
    L_0\Theta^*G\coloneqq \theta^{-1} G \otimes_{\theta^{-1} \cA_Y} \cA_X \cong L_0 \theta^*G\otimes_{L_0\theta^* \cA_Y} \cA_X \in  \QCoh(X, \cA_X).
\end{equation*}
Since $Y$ is projective, there is a locally free $\O_Y$-module $\cV$ (of finite rank if $G$ is a coherent $\O_Y$-module) and an $\O_Y$-module epimorphism $\cV \overset{s}{\twoheadrightarrow} G$. Thus, there is a right $\cA_Y$-module epimorphism
\begin{equation*}
    \cV\otimes \cA_Y \twoheadrightarrow G, \quad v\otimes a\mapsto s(v)a, \quad v\in\cV, a\in \cA_Y
\end{equation*}
from the locally free $\cA_Y$-module $\cV\otimes \cA_Y$. This means that $\QCoh(Y, \cA_Y)$ has enough locally free objects. By Lemma 13.29.1 and the proof of Lemma 20.26.12 in \cite{stacks-project}, for every complex $G\in \DD(\QCoh(Y,\cA_Y))$, there is a locally free resolution $K\in \DD(\QCoh(Y,\cA_Y))$. We can define the derived functor $\Theta^*$ of $L_0\Theta^*$ by
\begin{equation*}
    \Theta^* G\coloneqq L_0\Theta^*K.
\end{equation*}
This gives a well-defined functor
\begin{equation*}
    \Theta^*\colon \DD(\QCoh(Y, \cA_Y)) \to \DD(\QCoh(X, \cA_X)).
\end{equation*}
It is clear that $\Theta^*$ is a left adjoint of $\Theta_*$.

\begin{lemma}[Projection formula]\label{projf}
    Let $\Theta=(\theta, \theta_{\cA})\colon (X, \cA_X)\to (Y, \cA_Y)$ be a morphism between noncommutative projective schemes defined by Definition \ref{noncommdef}. For every $F\in \DD(\QCoh(X, \cA_X^{\op}))$ and $G\in \DD(\QCoh(Y, \cA_Y))$ the natural map
    \begin{equation}\label{projformula}
        G \otimes_{\cA_Y}^\LL \Theta_*(F) \to \Theta_*(\Theta^*(G) \otimes_{\cA_X}^\LL  F)
    \end{equation}
    is an isomorphism in $\DD(\QCoh(Y))$.
\end{lemma}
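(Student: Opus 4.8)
The plan is to reduce the projection formula to the classical one on the underlying schemes by forgetting the algebra structures, and then to check compatibility with the module structures. First I would set up notation: write $\theta\colon X\to Y$ for the underlying morphism, and recall that for a right $\cA_Y$-module $G$ and a left-$\cA_X$/right-$\cA_X^{\op}$ bimodule situation, $\Theta^*(G)\otimes_{\cA_X}^\LL F$ makes sense as a complex of $\O_X$-modules, while $G\otimes_{\cA_Y}^\LL\Theta_*(F)$ is a complex of $\O_Y$-modules. The key observation is that $\Theta^*(G)\otimes_{\cA_X}^\LL F = (\theta^{-1}G\otimes^\LL_{\theta^{-1}\cA_Y}\cA_X)\otimes^\LL_{\cA_X}F$, and by associativity of derived tensor product this is canonically $\theta^{-1}G\otimes^\LL_{\theta^{-1}\cA_Y}F$. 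So the statement to prove becomes
\begin{equation*}
G\otimes^\LL_{\cA_Y}\theta_*(F)\;\xrightarrow{\ \sim\ }\;\theta_*\bigl(\theta^{-1}G\otimes^\LL_{\theta^{-1}\cA_Y}F\bigr).
\end{equation*}

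Next I would resolve $G$ by a complex of locally free right $\cA_Y$-modules of the form $\cV\otimes\cA_Y$, as constructed just before the lemma; since $\Theta^*$, $\otimes^\LL$, and $\theta_*$ all commute with the relevant colimits and the claim is local on $Y$, it suffices to treat a single free module $G=\cV\otimes\cA_Y$ with $\cV$ locally free of finite rank over $\O_Y$ (here one also uses that $\theta_*$ has finite cohomological dimension because $\theta$ is a morphism of projective, hence Noetherian, schemes, so the spectral sequence arguments converge). For $G=\cV\otimes\cA_Y$ the left-hand side is $\cV\otimes_{\O_Y}\cA_Y\otimes^\LL_{\cA_Y}\theta_*(F)=\cV\otimes_{\O_Y}\theta_*(F)$, and the right-hand side is $\theta_*\bigl(\theta^{-1}(\cV\otimes\cA_Y)\otimes^\LL_{\theta^{-1}\cA_Y}F\bigr)=\theta_*\bigl(\theta^*\cV\otimes_{\O_X}F\bigr)$. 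So on free modules the statement is exactly the ordinary projection formula $\cV\otimes_{\O_Y}\theta_*(F)\cong\theta_*(\theta^*\cV\otimes_{\O_X}F)$ for the locally free sheaf $\cV$ and the complex $F$ of $\O_X$-modules, which is standard.

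The remaining work is to verify that the isomorphism so obtained is the \emph{natural} map (\ref{projformula}), i.e.\ that the identification is compatible with the resolution and does not depend on choices, and that it carries the $\cA_Y$-module structure on the target to the correct one. I would do this by the usual yoga: the natural transformation in (\ref{projformula}) is adjoint to a morphism $\theta^*(G\otimes^\LL_{\cA_Y}\theta_*F)\to \theta^*G\otimes^\LL_{\cA_X}F$ obtained from the counit $\theta^*\theta_*F\to F$ and the (lax) monoidality of $\theta^*$ with respect to $\otimes_{\cA}$; one then checks this is an isomorphism, which can be tested after forgetting to $\O$-modules, where it reduces to the classical fact just invoked. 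Because all functors involved are triangulated and the free $\cA_Y$-modules generate $D(\QCoh(Y,\cA_Y))$, an isomorphism on the generators that is natural extends to an isomorphism everywhere.

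The main obstacle I anticipate is purely bookkeeping rather than conceptual: making sure the various derived tensor products over the sheaves of \emph{noncommutative} rings $\cA_X$, $\theta^{-1}\cA_Y$ are well-defined (this needs the "enough locally free objects" statement proved above, applied also to $\cA_X^{\op}$-modules) and that the associativity isomorphism $(\theta^{-1}G\otimes^\LL_{\theta^{-1}\cA_Y}\cA_X)\otimes^\LL_{\cA_X}F\cong \theta^{-1}G\otimes^\LL_{\theta^{-1}\cA_Y}F$ is exactly the one implicit in the definition of $\Theta^*$, so that the adjunction counit and the natural map (\ref{projformula}) match up. Once these identifications are pinned down, reducing to the commutative projection formula by forgetting module structures is routine.
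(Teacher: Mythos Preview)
Your approach is correct and follows essentially the same strategy as the paper: reduce to free $\cA_Y$-modules and use that these generate $D(\QCoh(Y,\cA_Y))$ under direct sums and cones. The paper's proof is more terse---it localizes to affine $Y=\Spec R$ with $\cA_Y=\widetilde{A}$, then invokes the filtration from \cite[Tag~09KP]{stacks-project} to build any complex of $A$-modules from direct sums of shifts $A[k]$, checking the map is an isomorphism on $G=\widetilde{A[k]}$ and concluding by the two-out-of-three property for triangles and compatibility with direct sums; this is exactly how \cite[Tag~08EU]{stacks-project} proves the commutative version, and the paper simply observes that proof carries over. Your version is the same reduction spelled out more explicitly: you resolve $G$ globally by modules of the form $\cV\otimes\cA_Y$ and reduce, via your associativity observation, to the classical projection formula for the locally free $\O_Y$-module $\cV$. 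The only substantive difference is in how unbounded complexes are handled---you appeal to finite cohomological dimension of $\theta_*$, the paper to the stacks-project filtration---but both are standard and adequate.
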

\begin{proof}
    There exists a natural map
    \begin{equation*}
        \Theta^*(G \otimes_{\cA_Y}^\LL \Theta_*(F))\cong \Theta^*(G) \otimes_{\cA_X}^\LL \Theta^*\Theta_*(F)  \to \Theta^*(G) \otimes_{\cA_X}^\LL  F
    \end{equation*}
    because $\Theta^*$ commutes with derived tensor functors and $(\Theta^*, \Theta_*)$ are adjoint. Applying the adjointness again gives (\ref{projformula}). Let $T\subset \DD(\QCoh(Y, \cA_Y))$ be the full triangulated subcategory of objects $G$ such that (\ref{projformula}) is an isomorphism. We will prove that $T= \DD(\QCoh(Y, \cA_Y))$. 

    Note that $\DD(\QCoh(Y))$ has a compact generator $U\in \DD^{\perf}(Y)$ by \cite[Theorem 3.1.1]{bonvdbnoncom} and is cocomplete (admits arbitrary direct sums). Then $\DD(\QCoh(Y, \cA_Y))$ has a compact generator $U\otimes \cA_Y$ and is also cocomplete. Clearly $U\otimes \cA_Y\in T$ and $T$ is closed under arbitrary direct sums because $\Theta^*, \Theta_*, \otimes_{\cA_Y}^\LL, \otimes_{\cA_X}^\LL$ all commute with direct sums. We deduce from \cite[Theorem 2.2c]{luncatres} that $T=\DD(\QCoh(Y, \cA_Y))$.
\end{proof}

We end the section with a few simple examples. Let $X$ be a projective scheme.
\begin{ex}
    Let $\cV$ be a vector bundle on $X$. Then $\cV$ is a locally projective left module over $\sEnd(\cV)$ and  there is the Morita equivalence $\DD(X)\cong \DD(X, \sEnd(\cV))$ given by inverse equivalences $\cV^\vee \otimes_{\O_X} -$ and $-\otimes_{\sEnd(\cV)} \cV$.
\end{ex}
\begin{ex}
    Let $\cC$ be a coherent sheaf of commutative algebras on $X$. Then $\DD(X,\cC)\cong \DD(\Spec(\cC))$.
\end{ex}
\begin{ex}
    Let $\B$ be a coherent sheaf of algebras on $X$ and let $\cC$ be a central subalgebra of $\B$; i.e., $\cC$ is contained in the center of $\B$. Let $f\colon \Spec(\cC)\to X$ be the natural map and let $\wt{\B}$ be the unique sheaf of algebras on $\Spec(\cC)$ such that $f_*\wt{\B}\cong \B$ as sheaves of $\cC$-algebras. Then $\DD(X, \B)\cong \DD(\Spec(\cC), \wt{\B})$.
\end{ex}
\subsection{Some facts about derived categories}\label{derivedfacts}
We first provide a useful lemma that provides an easy criterion for semiorthogonal decompositions of derived categories of Gorenstein projective schemes to be admissible and provides a mutation for such decompositions. 

Recall that given a map $f\colon X\to S$, a triangulated subcategory $\cA\subset \D(X)$ is called $S$-linear if $\cF\otimes f^*\cG \in \cA$ for every $\cF\in \cA$ and $\cG\in \DD^{\perf}(S)$. A semiorthogonal decomposition of $\D(X)$ is $S$-linear if each component is $S$-linear, and is admissible if each component is admissible.

\begin{lemma}\label{adsodlem}
    Let $f\colon X\to S$ be a map between Gorenstein projective schemes. Assume that there is an $S$-linear semiorthogonal decomposition
    \begin{equation}\label{gsod}
        \D(X)=\langle \cA_1, \dots, \cA_n \rangle
    \end{equation}
    such that all but one components are contained in $\DD^{\perf}(X)$; i.e., there exists some $t$, $1\leqslant t\leqslant n$ such that $\cA_j\subset \DD^{\perf}(X)$ for every $j\neq t$. Then the relative dualizing sheaf $\omega_{X/S}$ is a line bundle and the semiorthogonal decomposition (\ref{gsod}) is admissible. In addition, there is an admissible $S$-linear semiorthogonal decomposition
    \begin{equation}\label{sodmu}
        \D(X) =\langle \cA_n \otimes \omega_{X/S}, \cA_1, \dots, \cA_{n-1} \rangle.
    \end{equation}
\end{lemma}
\begin{proof}
    Since $X,S$ are Gorenstein projective, dualizing sheaves $\omega_X, \omega_S$ are line bundles. Then $\omega_{X/S}$ is a line bundle because $\omega_{X/S}\cong \omega_X\otimes f^*\omega_S^\vee$. Set $\cA=\langle \cA_1, \dots, \cA_{n-1} \rangle$. By assumption we have $\cA$ or $\cA_n$ is contained in $\DD^{\perf}(X)$. Then by \cite[Lemma 2.15]{kpsk-1} $\cA_n$ is admissible and there is a SOD
    \begin{equation*}
        \D(X)=\langle \cA_n\otimes \omega_X, \cA \rangle = \langle \cA_n\otimes \omega_X, \cA_1, \dots, \cA_{n-1}\rangle,
    \end{equation*}
    which by definition is $S$-linear. Since $\omega_X\cong \omega_{X/S}\otimes f^*\omega_S$ and $\cA_n$ is $S$-linear, we have $\cA_n\otimes \omega_X\cong \cA_n\otimes \omega_{X/S}$. This gives the SOD (\ref{sodmu}). Repeating this process, we get $\cA_i$ is admissible for each $i, 1\leqslant i\leqslant n$ and an $S$-linear SOD
    \begin{equation*}
        \D(X) =\langle \cA_i \otimes \omega_{X/S}, \dots, \cA_n \otimes \omega_{X/S}, \cA_1, \dots, \cA_{i-1} \rangle.
    \end{equation*}
    Thus, (\ref{gsod})(\ref{sodmu}) are admissible SODs.
\end{proof}

In the next two propositions we present the behavior of the derived push-forward functor of certain proper morphism with fibers of dimension at most $1$ and how it induces semiorthogonal decompositions.
\begin{prop}[{\cite[Theorem 7.13]{bbnoncomdef}}]\label{pushforward}
    Let $\gamma\colon Y\to X$ be a proper morphism of normal varieties over $\kk$ with fibers of dimension at most $1$ such that (the derived push-forward) $\gamma_* \O_Y \cong \O_X$. Let $X^1$ be the locus of $X$ where fibers of $\gamma$ are one-dimensional and assume that $X^1$ is a finite set of points. For each closed point $x\in X^1$, let $C_x\coloneqq\gamma^{-1}(x)_{\text{red}}$ be the reduced fiber over $x$ and let $C_{x,i}$ be its irreducible components. Denote by $l_{x,i}\colon C_{x,i}\hookrightarrow Y$ the embeddings. Then each $C_{x,i}$ is a smooth rational curve and $\gamma_*\colon \D(Y) \to \D(X)$ is a Verdier quotient with kernel $\ker(\gamma_*)=\langle l_{x, i*} \O_{C_{x, i}}(-1) \rangle_{x\in X^1}$; i.e., we have an equivalence $\D(Y)/\ker(\gamma_*)\cong \D(X)$.
\end{prop}
\begin{proof}
    The condition $\gamma_* \O_Y \cong \O_X$ implies that $H^1(C_x, \O_{C_x})=0$ for every $x\in X^1$. Note that the base field $\kk$ is assumed to be algebraically closed. Thus, $C_{x,i}$ is a smooth rational curve by \cite[Theorem D.1]{bbflop}.

    Now we prove the claims for the derived push-forward functor $\gamma_*$. Since the fibers of $\gamma\colon Y\to X$ have dimension at most $1$, the spectral sequence $R^i\gamma_*\cH^j(\cF)\Rightarrow R^{i+j}\gamma_*\cF$, where $\cH^j$ is the cohomology sheaf in degree $j$, degenerates and we have short exact sequences
    \begin{equation}\label{specseq}
        0\to R^1\gamma_* \cH^{j-1}(\cF) \to R^j\gamma_* \cF \to R^0\gamma_* \cH^j(\cF)\to 0.
    \end{equation}
    This implies that $\ker(\gamma_*)$ is generated by $\ker(\gamma_*)\cap \Coh(Y)$. Since the one-dimensional fibers of $\gamma$ are isolated by assumption, every $\cF\in \ker(\gamma_*)\cap \Coh(Y)$ is the direct sum of sheaves supported on $C_x, x\in X^1$. Together with Proposition 7.12 in \cite{bbnoncomdef} this implies that $\ker(\gamma_*)\cap \Coh(Y)$ has finite length. Therefore, Theorem 7.13 in \cite{bbnoncomdef} implies that $\ker(\gamma_*)=\langle l_{x, i*} \O_{C_{x, i}}(-1) \rangle_{x\in X^1}$. 

    For the equivalence of the induced functor $\D(Y)/\ker(\gamma_*)\to \D(X)$, we use an argument similar to \cite[Theorem 2.14]{bks-schober}. We will make use of the canonical truncation functors $\tau^{\leqslant p}, \tau^{\geqslant p}$. The sequence (\ref{specseq}) implies that if $\tau^{\leqslant p} \cG =0$ for $\cG\in \D(X)$, then
    \begin{equation}\label{truncate}
        \gamma_*(\tau^{\leqslant p-1}\gamma^*\cG)=0,\quad \text{i.e.,} \quad \cG\cong \gamma_*\gamma^*\cG \cong \gamma_*(\tau^{\geqslant p}\gamma^*\cG).
    \end{equation}
    We deduce from this that $\gamma_*$ is essentially surjective. Finally, we claim that for $\cF, \cF'\in\D(Y)$, 
    \begin{equation*}
        \Hom_{\D(Y)/\ker(\gamma_*)}(\cF, \cF')\to \Hom_{\D(X)}(\gamma_*\cF, \gamma_*\cF')
    \end{equation*}
    is a bijection. The inverse map is constructed as follows. Choose $p$ such that $\tau^{\leqslant p} \cF=0$ and $\tau^{\leqslant p-1}\cF'=0$. A map $f\colon \gamma_*\cF \to \gamma_*\cF'$ gives a map
    \begin{equation*}
        g\colon \tau^{\geqslant p}\gamma^*\gamma_*\cF \to \tau^{\geqslant p}\cF'\cong \cF'.
    \end{equation*}
    Let $K$ be the cone of $\gamma^*\gamma_* \cF\to \cF$, which is bounded above but not necessarily bounded. Taking $\cG=\gamma_*\cF$ in (\ref{truncate}) we get
    \begin{equation*}
        \gamma_*\cF \cong \gamma_*(\tau^{\geqslant p}\gamma^*\gamma_*\cF).
    \end{equation*}
    Since $\cF \cong \tau^{\geqslant p}\cF$, we get $\gamma_*\tau^{\geqslant p} K=0$ and thus $\tau^{\geqslant p} K\in\ker(\gamma_*)$. This implies that the map $g$ corresponds to a map $\cF\to \cF'$ in the Verdier quotient $\D(Y)/\ker(\gamma_*)$.
\end{proof}

It is explained in \cite[\S2]{kks-surf} how one can induce a SOD of the derived category of a surface with rational singularities from a SOD of the derived category of its resolution. We claim that it can be applied to a proper morphism of dimension at most $1$.
\begin{prop}\label{indsod}
    Let $\gamma \colon Y\to X$ be the map in Proposition \ref{pushforward}. Suppose there is a semiorthogonal decomposition
    \begin{equation}\label{sody}
        \D(Y)= \langle \wt{\cA}_1, \dots, \wt{\cA}_n \rangle
    \end{equation}
    and assume that for every irreducible component $C_{x,i}, x\in X^1$, there exists some $j, 1\leqslant j \leqslant n$ such that $\O_{C_{x, i}}(-1)\in \wt{\cA}_j$. 

    (i) There is a semiorthogonal decomposition
    \begin{equation}\label{sodx}
        \D(X) =\langle \cA_1, \dots, \cA_n \rangle
    \end{equation}
    where $\cA_i = \gamma_*(\wt{\cA}_i)\cong \wt{\cA}_i/(\wt{\cA}_i\cap \ker(\gamma_*))$. 

    (ii) Assume that $Y$ is Gorenstein projective and all but one components of (\ref{sody}) are contained in $\DD^{\perf}(Y)$. Let $K_Y$ be the canonical Cartier divisor. If $K_Y. C_{x_i,j}=0$ for every $i,j$, then the SOD (\ref{sodx}) is admissible. 
\end{prop}
\begin{proof}
    (i) By \cite[Proposition 4.1]{kuzshcatab}, it suffices to show that $\ker(\gamma_*)\subset \D(Y)$ is compatible with (\ref{sody}); i.e., there is a SOD
    \begin{equation*}
        \ker(\gamma_*) = \langle \ker(\gamma_*)\cap \wt\cA_1, \dots, \ker(\gamma_*)\cap \wt{\cA}_n \rangle.
    \end{equation*}
    Clearly the collection on the right hand side is semiorthogonal. It generates $\ker(\gamma_*)$ by Proposition \ref{pushforward} and the assumption. 

    (ii) Since $K_Y. C_{x_i,j}=0$ for every $i, j$, we have if $E\in D_i$, then $\O_E(-1)\in \wt{\cA}_i(\pm K_Y)$. Moreover, by Lemma \ref{adsodlem} (take $S$ as a point) we get SODs
    \begin{equation*}
        \begin{split}
            & \D(Y)= \langle \wt{\cA}_{i+1}(K_Y), \dots, \wt{\cA}_n(K_Y), \wt{\cA}_1, \dots, \wt{\cA}_i \rangle,\\
            & \D(Y)= \langle \wt{\cA}_i, \dots, \wt{\cA}_n, \wt{\cA}_1(-K_Y), \dots, \wt{\cA}_{i-1}(-K_Y) \rangle
        \end{split}
    \end{equation*}
    for every $i$. Theses SODs of $\D(Y)$ also induce SODs of $\D(X)$ via $\gamma_*$ by (i). Thus, each $\cA_i\cong \gamma_*(\wt{\cA}_i)$ is both left and right admissible, which concludes (ii).
\end{proof}

We also provide a generalization of the criterion given by Kawamata in \cite{kawaodp} for the descent of SODs. This will be used to descend the SOD of $\D(Y_m)$ in Corollary \ref{ymsodnew} to a SOD of $\D(X_m)$ in Theorem \ref{mainthm}.
\begin{prop}[{\cite[Theorem 6.1 and 5.1]{kawaodp}}] \label{kawaprop}
    Let $X$ be a Gorenstein projective $3$-fold. Assume that $X$ is smooth away from a finite number of nodal points $\{a_1, \dots, a_n\}$. Suppose that there is a projective birational morphism $f\colon Y\to X$ such that the exceptional locus $E$ of $f$ is a smooth rational curve, $f(E)=a_1$ and $Y$ is smooth away from $\{a_2, \dots, a_n\}$; i.e., $f$ is a partial small resolution of $X$ at the nodal point $a_1$. Assume that there are Cartier divisors $D_1, D_2$ on $Y$ such that, for $N_i\coloneqq R^0f_*\O_Y(D_i), i=1,2$, the following conditions are satisfied:

    (1) $D_1.E=1$, $D_2.E=-1$;

    (2) $\{N_1, N_2\}$ is a simple collection; i.e., $\dim \Hom(N_i, N_j)=\delta_{ij}$ for $1\leqslant i, j\leqslant 2$;

    (3) $H^p(X, R^0f_*\O_Y(D_i-D_j))=0$ for all $p>0$ and $1\leqslant i, j\leqslant 2$;

    Then there are locally free sheaves $F_1, F_2$ of rank $2$ on $X$ given by non-trivial extensions
    \begin{equation}\label{ncdef}
        \begin{split}
            0\to N_2\to F_1\to N_1\to 0,\\
            0\to N_1\to F_2\to N_2\to 0.
        \end{split}
    \end{equation}
    Let $F=F_1\oplus F_2$ and let $\langle N_i\rangle_{i=1}^2$ be the triangulated subcategories of $\D(X)$ generated by $N_i$. Then

    (i) $\Ext^p(F,F)=0$ for all $p>0$, $\End(F)$ is isomorphic to the path algebra $R_1$ defined by (\ref{rn}), and $F$ is flat over $R_1$;

    (ii) The functor $\Phi=-\otimes_{R_1} F\colon \D(R_1) \to \D(X)$ is fully faithful and it induces an equivalence  $\D(R_1) \cong \langle N_i\rangle_{i=1}^2$.
\end{prop}
\begin{proof}
    (i) Note that the path algebra $R_1$ is isomorphic to 
    $R=\left(
    \begin{array}{cc}
        \kk & \kk t \\
        \kk t & \kk
    \end{array}
    \right)$
    mod $t^2$ in \cite[Theorem 6.1]{kawaodp}. The same proof in \textit{loc. cit.} concludes (i).

    (ii) It follows from the proof in \cite[Theorem 5.1 (1)]{kawaodp}. Let $\Psi=\RHom(F,-)\colon \D(X)\to \D(R_1)$. Then $\Psi$ is the right adjoint of $\Phi$ and $\Psi\circ \Phi$ is the identity, which implies that $\Phi$ is fully faithful. Note that $\D(R_1)$ is a triangulated category generated by its simple modules (one for each vertex in the quiver) and their images under $\Phi$ are $N_1, N_2$. This gives the equivalence.

    One should note that the assumptions that $X$ has just \textbf{one} nodal point and $Y$ is smooth in \cite[Theorem 6.1]{kawaodp} are only needed to prove that $N_1\oplus N_2$ generates the triangulated category of singularities $\DD_{\sg}(X)$.
\end{proof}
\section{Geometry of Nodal Quintic Del Pezzo Threefolds}\label{geometry}
In this section we will describe the nodal quintic del Pezzo threefolds $X_m$ in a way that their derived categories can be understood. In short, there is a small resolution $f_m\colon Y_m\to X_m$ at a nodal point and $p_m\colon Y_m\to \P^1$ is a quadric surface fibration. 
\begin{defn}
    A quintic del Pezzo threefold is a normal integral projective threefold $X$ with at worst terminal Gorenstein singularities such that $-K_X$ is ample, divisible by $2$ and  $(-K_X/2)^3=5$.
\end{defn}
The classification indicates that the quintic del Pezzo threefolds have at worst nodal singularities and have at most $3$ nodal points. The quintic del Pezzo threefold $X_m$ with $m$ nodes ($0\leqslant m\leqslant 3$) exists and is unique up to isomorphism for each $m$ by \cite[Corollary 8.7]{prodp3fold}. Moreover, they can be realized as codimension $3$ linear sections of $\Gr(2,5)$ embedded into $\P^9$ via Pl\"ucker embedding. Let $X$ be a codimension $3$ linear section of $\Gr(2,5)$. For a generic choice of the linear section $X$ is smooth. We first give a general discussion about when $X$ is a quintic del Pezzo threefold $X_m, 0\leqslant m\leqslant 3$ and then describe the singular $X_m, 1\leqslant m \leqslant 3$ as explicit linear sections in Lemma \ref{xm}. 

Let $V_5$ be a $5$-dimensional $\kk$-vector space and let $L\subset \Lambda^2 V_5^\vee$ be a $3$-dimensional subspace where $V_5^\vee$ is the dual vector space. Denote the orthogonal complement by $L^\perp \coloneqq \ker(\Lambda^2 V_5\to L^\vee)$. Let $X=\Gr(2, V_5) \cap \P(L^\perp)$. 
\begin{enumerate}
    \item $X$ is the smooth quintic del Pezzo threefold $X_0$ if and only if $\P(L)\cap \Gr(2, V_5^\vee)=\emptyset$; see \cite[Example 6.1]{kuzhs}.
    \item Assume that $L\subset \Lambda^2 V^\vee$ is a generic subspace such that $\P(L)\cap \Gr(2, V_5^\vee)$ is a disjoint union of $m$ points for $m=1,2,3$. Then $X$ is the nodal quintic del Pezzo threefold $X_m$ with $m$ nodes. 
\end{enumerate}
In Case (2) there is a one-to-one correspondence between the points on $\P(L)\cap \Gr(2, V_5^\vee)$ and the nodal points on $X$ as we now explain. Let $H$ be a hyperplane of $\P(\Lambda^2 V)$ corresponding to a point $p_H\in \P(L)\cap \Gr(2, V_5^\vee)$. Let $\P_H$ be the singular locus of $\Gr(2, V_5) \cap H$. Represent the point $p_H$ by a $2$-dimensional subspace $A_2$ of $V_5^\vee$ and let $B_3\coloneqq\ker{(V_5\to A_2^\vee)}$. Then $\P_H \cong \Gr(2, B_3) \cong \P^2$ and $\P_H\cap X$ is the node on $X$ corresponding to $p_H$. 

\medskip
From now on, we focus on singular $X_m$ for $1 \leqslant m \leqslant 3$. Recall that $\Gr(2,5)$ is the intersection of $5$ quadrics in $\P^9$. Denote the coordinates of $\P^9=\P(\Lambda^2 V_5)$ by $\{x_{ij}\}_{1\leqslant i<j\leqslant 5}$. Then $\Gr(2,5)$ is defined by the Pl\"ucker equations $\{x_{ij}x_{kl}-x_{ik}x_{jl}+x_{il}x_{jk}=0\}$ for $1\leqslant i<j<k<l\leqslant 5$.
\begin{lemma}\label{xm}
    The quintic del Pezzo threefold $X_m$ with $m$ nodes for $1\leqslant m\leqslant 3$ can be described by the following codimension $3$ linear sections of $\Gr(2,5)$:
    \begin{enumerate}
        \item $X_1\cong \Gr(2,5)\cap \{x_{45}=x_{23}+x_{14}=x_{13}+x_{25}=0\}$ and it has one node $a_1$;
        \item $X_2 \cong \Gr(2,5)\cap \{x_{45}=x_{23}=x_{13}+x_{14}+x_{25}=0\}$ and it has two nodes $a_1, a_2$;
        \item $X_3 \cong \Gr(2,5)\cap \{x_{45}=x_{23}=x_{13}+x_{14}=0\}$ and it has three nodes $a_1, a_2, a_3$
    \end{enumerate}
    where $a_1, a_2, a_3$ are points on $\P^9$ such that all coordinates are $0$ except for $x_{12}, x_{15}, x_{25}$, respectively.
\end{lemma}
\begin{proof}
    One can check directly that the linear sections in (1)-(3) have only nodal singularities at the given points $a_i$. Hence, they are quintic del Pezzo threefolds and the isomorphisms follow from the uniqueness of quintic del Pezzo threefolds with $m$ nodes for $1\leqslant m\leqslant 3$. 
\end{proof}

We will construct a birational morphism resolving the singularities of $X_m$ at the node $a_1$. We start with a general set-up and then use Lemma \ref{xm} to apply to $X_m$. 

Let $x=[V_2]\in\Gr(2,V_5)$ be a fixed point that is represented by a $2$-dimensional subspace $V_2$ of $V_5$. Consider the natural rational map 
\begin{equation}\label{rationalmap}
    \phi\colon \Gr(2, V_5) \dashrightarrow \Gr(2, V_5/V_2)\cong \P(\Lambda^2 (V_5/V_2))\cong \P^2
\end{equation} 
sending $[W_2]$ to its image in $V_5/V_2$.  Then the base locus of $\phi$ is $\{[W_2]\in\Gr(2,V_5)\,|\, W_2\cap V_2\neq 0\}$ or equivalently the union of lines in $\Gr(2, V_5)$ that contain $x$. 
This is a cone over $\P^1\times \P^2$ with vertex $x$. Moreover, $\phi$ is a linear projection from the embedded projective tangent space $T_x \Gr(2,V_5) \cong \P^6\subset \P^9$ of $\Gr(2,V_5)$ at $x$. Let $H$ be a hyperplane of $\P(\Lambda^2 V_5)$ corresponding to a point on 
\begin{equation*}
    \Gr(2, (V_5/V_2)^\vee) \cong \P(\Lambda^2 (V_5/V_2)^\vee) \subset \Gr(2, V_5^\vee) \subset \P(\Lambda^2 V_5^\vee).
\end{equation*}
These are the hyperplanes $H$ where $T_x\Gr(2,V_5)\subset H$, or equivalently $x$ is contained in the singular locus of $\Gr(2, V_5)\cap H$. The rational map $\phi$ (\ref{rationalmap}) restricts to a linear projection
\begin{equation}\label{ratmaph}
    \phi_H\colon \Gr(2, V_5)\cap H \dashrightarrow \P^1
\end{equation}
from $\Gr(2, V_5)\cap T_x\Gr(2,V_5) \subset \Gr(2, V_5)\cap H$. Blowing up $\Gr(2, V_5)\cap H$ along the base locus $\Gr(2, V_5)\cap T_x\Gr(2,V_5)$ of $\phi_H$ gives the resolution of indeterminancy
\begin{equation}\label{resindet}
    \begin{tikzcd}[column sep = small]
        & & \P^2\times \P^1 \arrow[hook]{r} \arrow{dl} &  Y=\Gr_{\P^1}(2, \O_{\P^1}^3\oplus \O_{\P^1}(-1))\arrow{ld}[swap]{f} \arrow{rd}{p} &\\
        & \P^2 \arrow[hook]{r} & \Gr(2, V_5)\cap H \arrow[dotted]{rr}{\phi_H} & & \P^1
    \end{tikzcd}
\end{equation}
where the exceptional locus of $f$ is $\Gr_{\P^1}(2, \O_{\P^1}^3)\cong \P^2\times \P^1$ and $f$ restricted to $\Gr_{\P^1}(2, \O_{\P^1}^3)$ is a trivial $\P^1$-bundle over the singular locus $\P^2$ of $\Gr(2, V_5)\cap H$. 

Alternatively, if we represent $H=[A_2]\in \Gr(2, V_5^\vee)$ by a $2$-dimensional subspace $A_2$ of $V_5^\vee$ and let $B_3=\ker(V_5\to A_2^\vee)$, then
\begin{equation*}
    \Gr(2, V_5)\cap H \cong \{[W_2]\in \Gr(2, V_5)\,|\, W_2\cap B_3\neq 0\}.
\end{equation*}
Since $[A_2]\in \Gr(2, (V_5/V_2)^\vee)$, we have $V_2\subset B_3$. Using this description we have
\begin{equation*}
    Y\cong \{([W_2], [W_4])\in \Gr(2, V_5)\times \Gr(4, V_5)\,|\, W_2\subset W_4, B_3\subset W_4\}
\end{equation*}
and maps $f,p$ are given as
\begin{equation*}
    \begin{split}
        & f\colon Y\to \Gr(2, V_5)\cap H, \quad ([W_2], [W_4]) \mapsto [W_2],\\
        & p\colon Y\to \P^1\subset \Gr(2, V_5/V_2), \quad ([W_2], [W_4]) \mapsto [W_4/V_2].
    \end{split}
\end{equation*}

Using the descriptions of $X_m$ in Lemma \ref{xm}, we take $x=a_1$ (i.e., $V_2$ is generated by the basis $e_1, e_2$ with $e_1\wedge e_2=x_{12}$) and $H=\{x_{45}=0\}$. We get that $X_m\subset H$ for $1\leqslant m\leqslant 3$, the target $\P(\Lambda^2 (V_5/V_2))$ of the map $\phi$ (\ref{rationalmap}) is $\P^2_{x_{34}, x_{35}, x_{45}}$ and the singular locus of $\Gr(2,5)\cap H$ is $\P^2_{x_{12}, x_{13}, x_{23}}$.

\begin{prop}\label{setup}
    Let $\phi_m\colon X_m\dashrightarrow \P^1$ be the restriction of the rational map $\phi_H$ (\ref{ratmaph}) for $1\leqslant m\leqslant 3$. Let $Y_m=f^{-1}(X_m)$. Let $f_m=f|_{Y_m}\colon Y_m\to X_m$ and $p_m=p|_{Y_m}\colon Y_m\to \P^1$ be the respective restrictions of maps in (\ref{resindet}). By construction we have $\phi_m=p_m\circ f_m^{-1}$ and the following descriptions hold.

    (i) The map $f_m\colon Y_m\to X_m$ is a birational morphism that resolves the singularity of a nodal point $a_1\in X_m$ and contracts a smooth rational curve $E$ on $Y_m$ to $a_1$, and the map $p_m\colon Y_m\to \P^1$ is a quadric surface fibration where $E$ is a smooth section (consists of smooth points on the fibers). In particular, $Y_m$ has $m-1$ nodal points and is Gorenstein. The fibers of $p_m, m=1,2$ are quadrics of corank at most $1$ (smooth or a cone over a smooth quadric) and $p_3$ has a fiber of corank $2$.

    (ii) Let $\E=\O_{\P^1}\oplus \O_{\P^1}(-1)^3$ and $\cL=\O_{\P^1}(-1)$. Let $\pi\colon \P(\E)\to \P^1$ be the projection. Then the total space $Y_m$ of the quadric surface fibration $p_m$ is the zero locus of a global section
    \begin{equation*}
        \sigma_m \in \Gamma(\P(\E), \O_{\P(\E)/\P^1}(2)\otimes \pi^*\cL) \cong \Gamma(\P^1, \Sym^2(\E^\vee) \otimes \cL)
    \end{equation*}
    on $\P(\E)$ where $\E^\vee$ is the dual of $\E$ and $\Sym^2$ is the second symmetric product. 

    (iii) The exceptional locus $E$ of $f_m\colon Y_m\to X_m$ is the projectivization of $\O_{\P^1}\subset \E$ and $E$ is a $(-1,-1)$-curve on $Y_m$; i.e., the normal bundle $N_{E/Y_m}$ is $\O_E(-1)^2$.

    (iv) We have
    \begin{equation*}
        \O_{Y_m/\P^1}(1) \cong f_m^*\O_{X_m}(1)
    \end{equation*}
    where $\O_{Y_m/\P^1}(1)$ is the restriction of $\O_{\P(\E)/\P^1}(1)$ to $Y_m\subset \P(\E)$ and $\O_{X_m}(1)$ is the restriction of $\O_{\P^6}(1)$ to $X_m\subset \P^6$. In particular, we get $\O_{Y_m/\P^1}(1)|_E\cong \O_E$ and
    \begin{equation*}
        \omega_{Y_m} \cong f_m^*\omega_{X_m} \cong \O_{Y_m/\P^1}(-2)
    \end{equation*}
    (the dualizing sheaves $\omega_{Y_m}, \omega_{X_m}$ are line bundles because $Y_m, X_m$ are Gorenstein).

    (v) Let $L_m$ be the Hilbert scheme of lines on $X_m$ that contain $a_1$. Then $L_m$ is a nodal curve of arithmetic genus $0$ and degree $3$ on $\P^3$. More specifically, $L_1$ is the twisted cubic curve, $L_2$ is a chain of two $\P^1$'s, and $L_3$ is a chain of three $\P^1$'s. Moreover, we can embed $L_m$ into $ X_m$ such that $a_1\notin L_m$, $\{a_2\}$ is the singular point of $L_2$ and $\{a_2, a_3\}$ are the singular points of $L_3$.
\end{prop}
\begin{proof}
    We can use the explicit equations in Lemma \ref{xm} for the argument.

    (i) We first note that the ideal of $\Gr(2,5)\cap H = \Gr(2,5)\cap \{x_{45}=0\}$ in $\P^8$ is generated by
    \begin{equation}\label{minor}
        \text{the three $2$-by-$2$ minors making}\quad
        \rank \left(
        \begin{array}{ccc}
            x_{14} & x_{24} & x_{34} \\
            x_{15} & x_{25} & x_{35} 
        \end{array}
        \right) \leqslant 1
    \end{equation}
    and 
    \begin{equation*}
        \left(
        \begin{array}{ccc}
            x_{14} & x_{24} & x_{34} \\
            x_{15} & x_{25} & x_{35} 
        \end{array}
        \right)
        \left(
        \begin{array}{r}
            x_{23}\\
            -x_{13}\\
            x_{12}
        \end{array}
        \right)
    \end{equation*}
    We also have that the base locus $\Bs(\phi_m)$ of $\phi_m$ is given by $X_m\cap\{x_{34}=x_{35}=0\}$. Then $Y_m$, which is the blow up of $X_m$ along $\Bs(\phi_m)$, can be described explicitly as
    \begin{equation}\label{ym}
        Y_m \cong \Proj \frac{\kk[x_{12}, x_{i4}, x_{i5}, 1\leqslant i\leqslant 3][u, v]}{(vx_{i4}-ux_{i5}, 1\leqslant i\leqslant 3, q_{m,1}, q_{m,2})}
    \end{equation}
    where $q_{m,1}, q_{m,2}$ are quadratic forms
    \begin{itemize}
        \item $q_{1,1}=x_{12}x_{34}+x_{24}x_{25}-x_{14}^2$, $q_{1,2}=x_{12}x_{35}+x_{25}^2-x_{14}x_{15}$;
        \item $q_{2,1}=x_{12}x_{34}+(x_{14}+x_{25})x_{24}$, $q_{2,2}=x_{12}x_{35}+(x_{14}+x_{25})x_{25}$;
        \item $q_{3,1}=x_{12}x_{34}+x_{14}x_{24}$, $q_{3,2}=x_{12}x_{35}+x_{14}x_{25}$.
    \end{itemize}
    The exceptional locus $E$ is defined by $\{x_{i4}=x_{i5}=0, 1\leqslant i\leqslant 3\}$. Then $E\cong \P^1$ and $f_m \colon Y_m\to X_m$ maps $E$ to the node $a_1$. 

    From (\ref{ym}) we get that $Y_m$ is a subscheme of
    \begin{equation}\label{projb}
        P=\Proj \frac{\kk[x_{12}, x_{i4}, x_{i5}, 1\leqslant i\leqslant 3][u, v]}{(vx_{i4}-ux_{i5}, 1\leqslant i\leqslant 3)},
    \end{equation}
    which is the zero locus of a regular section of $\O_{\P^6\times \P^1}(1,1)^3$. One can easily compute that $P\cong \P(\E)$ where $\E= \O_{\P^1}\oplus \O_{\P^1}(-1)^3$.

    Let $\pi\colon \P(\E)\to \P^1$ be the projection. Let $U=\pi^{-1}(\{u=1\})$ and $V=\pi^{-1}(\{v=1\})$ be the open cover of $\P(\E)$. Then inside $U$ and $V$, respectively, we have
    \begin{equation}\label{quad-eq}
        \begin{split}
            & Y_1\cap U = \{x_{12}x_{34}+vx_{24}^2-x_{14}^2=0\}, \quad Y_1\cap V=\{x_{12}x_{35}+x_{25}^2-ux_{15}^2=0\};\\
            & Y_2\cap U=\{x_{12}x_{34}+x_{14}x_{24}+vx_{24}^2=0\}, \quad Y_2\cap V=\{x_{12}x_{35}+ux_{15}x_{25}+x_{25}^2=0\};\\
            & Y_3\cap U=\{x_{12}x_{34}+x_{14}x_{24}=0\}, \quad Y_3\cap V=\{x_{12}x_{35}+ ux_{15}x_{25}=0\}.
        \end{split}  
    \end{equation}

    One easily checks that $E$ is a smooth section from the equations above. Moreover, $p_1$ has singular fibers over the points $\{u=0\}$, $\{v=0\}$ and the fibers are of corank $1$; $p_2$ has a singular fiber over the point $\{u=0\}$ and the fiber is of corank $1$; $p_3$ has a singular fiber over the point $\{u=0\}$ and the fiber is of corank $2$. This concludes (i).

    \medskip
    For the proof of (ii)-(iv) we observe from the proof of (i) that $Y_m$ is the zero locus of a global section 
    \begin{equation}\label{ymsec}
        \sigma_m \in \Gamma(\P(\E), \O_{\P(\E)/\P^1}(2) \otimes \pi^* M_m) \cong \Gamma(\P^1, \Sym^2(\E^\vee) \otimes M_m)
    \end{equation}
    for some line bundle $M_m$ on $\P^1$. This implies that $p_{m*}\O_{Y_m/\P^1}(1) \cong \E^\vee \cong \O_{\P^1} \oplus \O_{\P^1}(1)^3$.

    (iv) Since each fiber of $p_m$ is a quadric surface contained in $X_m$ that passes through $a_1$, we have $\O_{Y_m/\P^1}(1) \cong f_m^* \O_{X_m}(1) \otimes p_m^* N_m$ for some line bundle $N_m$ on $\P^1$. Since
    \begin{equation*}
        h^0(X_m, \O_{X_m}(1)) = 7 = h^0(Y_m, \O_{Y_m/\P^1}(1)),
    \end{equation*}
    we get $N_m\cong\O_{\P^1}$. Since $f_m\colon Y_m\to X_m$ is a small contraction and $X_m$ is a Fano variety of index $2$, 
    \begin{equation*}
        \omega_{Y_m} \cong f_m^*\omega_{X_m} \cong f_m^*\O_{X_m}(-2) \cong \O_{Y_m/\P^1}(-2).
    \end{equation*}

    (ii) It remains to determine the line bundle $M_m$ in (\ref{ymsec}). We achieve this by computing the dualizing sheaf $\omega_{Y_m}$. The adjunction formula gives
    \begin{equation*}
        \begin{array}{rl}
            \omega_{Y_m} & \cong (\omega_{\P(\E)} + Y_m)\,|_{Y_m}\\ 
            & \cong (\O_{\P(\E)/\P^1}(-4)\otimes \pi^*\det(\E^\vee)\otimes \pi^*\omega_{\P^1} \otimes \O_{\P(\E)/\P^1}(2)\otimes \pi^* M_m)\,|_{Y_m}\\
            & \cong \O_{Y_m/\P^1}(-2)\otimes p_m^* (\O_{\P^1}(1)\otimes M_m).\\
        \end{array}
    \end{equation*}
    Hence, using (iv) we get $M_m \cong \O_{\P^1}(-1)$.

    (iii) It is easy to see from the proof of (i) that $E$ is the projectivization of $\O_{\P^1}\subset \E$ and the normal bundle $N_{E/\P(\E)}$ is $\O_E(-1)^3$. In terms of the normal bundle $N_{E/Y_m}$, we make use of the short exact sequence of normal bundles
    \begin{equation*}
        0\to N_{E/Y_m}\to N_{E/\P(\E)} \to N_{Y_m/\P(\E)}|_E \to 0.
    \end{equation*}
    Since $\O_{Y_m/\P^1}(1)|_E\cong \O_E$ by (iv) and $E$ is a section of $p_m$, we get from (ii) that
    \begin{equation*}
        N_{Y_m/\P(\E)}|_E \cong (\O_{Y_m/\P^1}(2)\otimes p_m^*\O_{\P^1}(-1))|_E \cong \O_E(-1).
    \end{equation*}
    This implies that $N_{E/Y_m}\cong \O_E(-1)^2$.

    \medskip
    (v) We mentioned after the construction of the map $\phi$ (\ref{rationalmap}) that the base locus $\Bs(\phi)$ consists of lines on $\Gr(2,5)$ that pass through $x=a_1$. Then $\Bs(\phi_m)$ as a linear section of $\Bs(\phi)$ consists of lines on $X_m$ that pass through $a_1$. That is, $\Bs(\phi_m)$ is the cone over $L_m$ with the vertex $a_1$ and $L_m$ can be identified with the projection of $\Bs(\phi_m)$ from $a_1$. Explicitly, $\Bs(\phi_m)=X_m\cap\{x_{34}=x_{35}=x_{45}=0\}$ and $L_m \cong \Bs(\phi_m)\cap \{x_{12}=0\}$. To describe $L_m$ one notes that $\Gr(2,5)\cap \{x_{12}=x_{34}=x_{35}=x_{45}=0\}$ is isomorphic to $\P^1\times \P^2$ and the Segre embedding $\P^1\times \P^2\hookrightarrow \P^5$ is of degree $3$. Therefore, $L_m$ is obtained by cutting $\P^1\times \P^2$ with the respective hyperplane sections for $X_m$.
\end{proof}

Recall from \S\ref{sec-notation} that we denoted
\begin{equation*}
    \begin{split}
        & \B_{m,0} \cong \O_{\P^1}\oplus (\Lambda^2\E \otimes \cL^\vee) \oplus (\Lambda^4\E \otimes (\cL^2)^\vee) \cong \O_{\P^1}^4 \oplus \O_{\P^1}(-1)^4,\\
        & \B_{m,1} \cong \E \oplus (\Lambda^3\E \otimes \cL^\vee) \cong \O_{\P^1} \oplus \O_{\P^1}(-1)^6 \oplus \O_{\P^1}(-2)
    \end{split}
\end{equation*}
as even and odd parts of the Clifford algebra of the quadric surface fibration $p_m\colon Y_m\to \P^1$, respectively. Then
\begin{equation*}
    \Z_m= \O_{\P^1}\oplus \Lambda^4\E \otimes (\cL^2)^\vee \cong \O_{\P^1}\oplus \O_{\P^1}(-1)
\end{equation*}
is a central subalgebra of $\B_{m,0}$ and the map $g_m\colon C_m = \Spec_{\P^1}(\Z_m) \to \P^1$ is the double cover ramified along the degeneration locus of $p_m$. We also denoted $\wt{\B_{m,0}}$ as the sheaf of algebras on $C_m$ when one regards $\B_{m,0}$ as a sheaf of algebras over $\Z_m$.

\begin{lemma}\label{doublecover}
    (i) $C_1\cong \P^1$ and $C_2 \cong C_3$ is a chain of two $\P^1$'s. The map $g_1\colon C_1\to \P^1$ is a double cover ramified at $[1: 0], [0: 1]\in \P^1$ and the map $g_i \colon C_i\to \P^1$ is a double cover ramified at the double point supported at $[0:1]\in\P^1$ for $i=2,3$.

    (ii) Let $H(p_m)$ be the Hilbert scheme of lines on the fibers of $p_m\colon Y_m\to\P^1$ for $m=1,2$. Let
    \begin{equation*}
        \cV_m=\left\{
        \begin{array}{ll}
            \O_{C_1}\oplus \O_{C_1}(-1), & m=1\\
            \O_{C_2}\oplus \O_{C_2}\{-1,0\}, & m=2\\
        \end{array}
        \right.
    \end{equation*}
    where $\O_{C_2}\{a,b\}$ is the line bundle whose restriction to the first $\P^1$ is $\O_{\P^1}(a)$ and to the second $\P^1$ is $\O_{\P^1}(b)$. Then $\wt{\B_{m,0}}\cong \sEnd(\cV_m)$ and $H(p_m)\cong \P(\cV_m)$ for $m=1,2$.
\end{lemma}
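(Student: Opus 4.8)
The plan is to read both statements off the explicit model of Proposition \ref{setup}, using Example \ref{qfex} to control the centre $\Z_m$ and the even Clifford algebra of each fibre of $p_m$. For part (i) I would first note that by Example \ref{qfex} the centre of the even Clifford algebra of a corank-$\leqslant 1$ quadratic $4$-form is $\kk[d]/(d^2-\det)$, so that globalising over $\P^1$ gives $\Z_m\cong\O_{\P^1}[d]/(d^2-\det\sigma_m)$ with $\det\sigma_m\in\Gamma(\P^1,\O_{\P^1}(2))$ the discriminant section; thus $g_m\colon C_m\to\P^1$ is the double cover branched along the degeneration divisor $D_m=V(\det\sigma_m)$, of degree $2$. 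It then remains to compute $D_m$ from the quadratic forms $q_{m,1},q_{m,2}$ of \eqref{ym}: on each standard affine chart of $\P^1$ one eliminates the relations $vx_{i4}=ux_{i5}$, observes that one of $q_{m,1},q_{m,2}$ becomes a scalar multiple of the other, and computes the determinant of the surviving rank-$4$ form. A direct calculation then gives $D_1=[1:0]+[0:1]$ (two reduced points) and $D_2=D_3=2\,[0:1]$. A connected double cover of $\P^1$ branched at two reduced points is $\cong\P^1$ (and $C_1$ is connected since $D_1$ is not a square in $|\O_{\P^1}(2)|$), while a double cover branched along $2\,[0:1]$, being $\Spec_{\P^1}$ of the split algebra $\O_{\P^1}[d]/(d^2-u^2)$, is the nodal union of the two sections $d=\pm u$, i.e.\ a chain of two $\P^1$'s; this gives (i) together with the asserted ramification points.

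For part (ii) I would proceed as follows. Since the fibres of $p_m$ have corank $\leqslant 1$ for $m=1,2$ (Proposition \ref{setup}(ii)), the final case of Example \ref{qfex} applies fibrewise and exhibits the even Clifford algebra as $\M_2$ of its centre; globalising, $\wt{\B_{m,0}}$ is an Azumaya $\O_{C_m}$-algebra of degree $2$ (this is the "simple degeneration" situation, cf.\ \cite{kuzqfib}). As $C_1\cong\P^1$ and $C_2$ is a chain of two $\P^1$'s, $\mathrm{Br}(C_m)=0$ (Tsen's theorem on the components, together with the node), so $\wt{\B_{m,0}}\cong\sEnd(\cV_m)$ for a rank-$2$ vector bundle $\cV_m$ on $C_m$, unique up to twist by a line bundle. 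Dually, each fibre of $H(p_m)\to\P^1$ is the variety of lines in a quadric surface of corank $\leqslant 1$ --- two disjoint $\P^1$'s (the rulings) over a smooth fibre, a single $\P^1$ (the lines through the vertex) over a corank-$1$ fibre --- and these families are precisely indexed by the fibre of $C_m\to\P^1$, so $H(p_m)\to C_m$ is a $\P^1$-bundle; since the even Clifford algebra acts on the even spinor sheaf of $p_m$, realizing the lines in the fibres, this $\P^1$-bundle is $\P(\cV_m)$ for the same $\cV_m$, the residual twist being invisible to $\P(-)$.

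Finally, to pin down $\cV_m$ up to that twist I would push forward along $g_m$: the identity $\O_{\P^1}^4\oplus\O_{\P^1}(-1)^4=\B_{m,0}=g_{m*}\sEnd(\cV_m)$, with $g_m$ explicit from part (i), fixes the ranks and degrees of $\cV_m$ on each component of $C_m$ and forces $\cV_1=\O_{C_1}\oplus\O_{C_1}(-1)$ and $\cV_2=\O_{C_2}\oplus\O_{C_2}\{-1,0\}$; alternatively one exhibits the two rulings directly from \eqref{ym}. I expect the main obstacle to be the second half of (ii): checking that $\wt{\B_{m,0}}$ is genuinely Azumaya --- in particular locally free of rank $4$ --- over the possibly singular curve $C_m$, where the corank-$1$ fibres and, for $m=2$, the node of $C_2$ are the delicate points, and then controlling the twist of $\cV_m$ precisely, which for the reducible curve $C_2$ means keeping track of the restriction of $\cV_2$ to each component (hence the bidegree $\{-1,0\}$) and of the gluing over the node. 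This is where the explicit equations \eqref{ym} are indispensable; by contrast the determinant computation in part (i) is routine.
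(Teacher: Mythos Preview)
Part (i) and the first half of part (ii) follow the paper's line closely. The paper cites \cite[Proposition~3.13]{kuzqfib} for the Azumaya property rather than arguing fibrewise from Example~\ref{qfex}, and it trivialises $\wt{\B_{m,0}}$ by exhibiting a section of the Severi--Brauer scheme $H(p_m)\to C_m$ (the family of lines meeting the smooth section $E$) rather than by invoking $\mathrm{Br}(C_m)=0$. Both routes are valid, but the paper's section does double duty: it also gives the identification $H(p_m)\cong\P(\cV_m)$ directly, whereas your spinor-sheaf sentence is not really an argument for that.

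Your final step, however, has a genuine gap. The identity $g_{m*}\sEnd(\cV_m)\cong\O_{\P^1}^4\oplus\O_{\P^1}(-1)^4$ as $\O_{\P^1}$-modules does \emph{not} determine $\cV_m$ up to twist. For $m=1$ one checks that $g_{1*}\O_{C_1}(1)\cong\O_{\P^1}^2$ and $g_{1*}\O_{C_1}(-1)\cong\O_{\P^1}(-1)^2$, so both $\cV_1=\O_{C_1}^{\oplus 2}$ and $\cV_1=\O_{C_1}\oplus\O_{C_1}(-1)$ give $g_{1*}\sEnd(\cV_1)\cong\O_{\P^1}^4\oplus\O_{\P^1}(-1)^4$; the same ambiguity persists for $m=2$. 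What distinguishes the two candidates is the \emph{algebra} structure, and this is where the paper does the actual work. Using the matrix identification \eqref{matrix}--\eqref{actionbycenter} from Example~\ref{qfex} together with the grading $v_1\in\O_{\P^1}$, $v_2,e_3,e_4\in\O_{\P^1}(-1)$ on $\E$ (equation~\eqref{basis}), the paper identifies the off-diagonal blocks $g_{m*}\sHom(\O_{C_m},\cL_m)$ and $g_{m*}\sHom(\cL_m,\O_{C_m})$ inside $\B_{m,0}$ with the specific sub-bundles spanned by $v_2e_3,v_2e_4$ (isomorphic to $\O_{\P^1}(-1)^2$) and $v_1e_3,v_1e_4$ (isomorphic to $\O_{\P^1}^2$). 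The resulting cohomology constraints $H^\bullet(C_m,\cL_m)=0$ and $H^\bullet(C_m,\cL_m^*)=\kk^2$ then exclude $\cL_m\cong\O_{C_m}$ and force $\cL_1\cong\O_{C_1}(-1)$, $\cL_2\cong\O_{C_2}\{-1,0\}$ (the latter up to the involution of $g_2$). This explicit bookkeeping \eqref{vb12} is not cosmetic: it is reused verbatim in Proposition~\ref{3rdcase} (see \eqref{vb3}) and in Lemma~\ref{se} to identify $\Psi_{m,l}(\O_{L_m})$ with a spinor sheaf, so you cannot bypass it.
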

\begin{proof}
    (i) By Proposition \ref{setup} (ii) the quadric surface fibration $p_m\colon Y_m\to \P^1$ corresponds to a symmetric bilinear form $b_m\colon \Sym^2(\E)\to \cL$. The form $b_m$ induces a morphism $\E\to \sHom(\E, \cL)$ whose determinant gives a global section of
    \begin{equation*}
        \Gamma(\P^1, \det(\E^\vee)^2\otimes \cL^4)\cong \Gamma(\P^1, \O_{\P^1}(2)).
    \end{equation*}
    Denote the global section by $\det(b_m)$. The degeneration locus of $p_m$ is given by $\{\det(b_m)=0\}$ and the maps $g_m\colon C_m=\Spec_{\P^1}(\Z_m)\to \P^1$ are locally defined by $\Spec \O_{\P^1}[d]/(d^2-\det(b_m))$. The equations (\ref{quad-eq}) of $Y_m$ indicate that
    \begin{equation*}
        \det(b_1)=\lambda_1 uv, \quad \det(b_2)=\lambda_2 u^2, \quad \det(b_3)= \lambda_3 u^2
    \end{equation*}
    for some non-zero $\lambda_m\in \kk$. The result then follows.

    (ii) Proposition \ref{setup} (i) states that the fibers of $p_m, m=1,2$ are quadrics of corank $\leqslant 1$. By \cite[Proposition 3.13]{kuzqfib} $\wt{\B_{m,0}}$ is a sheaf of Azumaya algebras on $C_m$. The natural morphism $H(p_m)\to \P^1$ factors as the composition of a $\P^1$-fibration $H(p_m)\to C_m$ followed by $g_m\colon C_m\to \P^1$. In fact, $H(p_m)$ is the Severi-Brauer scheme or the $\P^1$-fibration over $C_m$ corresponding to the sheaf of Azumaya algebras $\wt{\B_{m,0}}$. Furthermore, $H(p_m)\to C_m$ has a section given by the lines on the fibers of $p_m$ intersecting the section $E$. Then $\wt{\B_{m,0}}$ is Brauer trivial; i.e., it is $\sEnd(\cV_m)$ for some rank $2$ vector bundle $\cV_m$ on $C_m$ and $H(p_m) \cong \P(\cV_m)$.

    It remains to determine $\cV_m$. Every indecomposable vector bundle on a chain of $\P^1$'s is a line bundle. This fact is used in \cite[\S2]{burratsingc} and a proof can be found in Corollary 6.2 of \cite{drozdvb}. Hence, up to tensoring by a line bundle, we can assume 
    \begin{equation*}
        \cV_m=\O_{C_m}\oplus \cL_m
    \end{equation*}
    for a line bundle $\cL_m$ on $C_m$. 

    One observes from (\ref{quad-eq}) that $Y_1, Y_2$ are defined by the type of quadratic equations in Example \ref{qfex}. Adopting the notations in this example, we can set the local sections of direct summands of $\E=\O_{\P^1}\oplus \O_{\P^1}(-1)^3$ as below.
    \begin{equation}\label{basis}
        v_1, v_2, e_3, e_4 \quad \text{are local sections of} \quad \O_{\P^1}, \O_{\P^1}(-1), \O_{\P^1}(-1), \O_{\P^1}(-1), \; \text{respectively.}
    \end{equation}
    The local sections of $\sEnd(\cV_m)$ can be obtained from (\ref{matrix}). In particular, 
    \begin{equation*}
        \begin{split}
            & \sHom(\O_{C_m}, \cL_m)\cong \cL_m \quad \text{has a local section} \quad v_2v_3,\\
            & \sHom(\cL_m, \O_{C_m}) \cong \cL_m^\vee \quad \text{has a local section} \quad v_1v_3.
        \end{split}
    \end{equation*}
    From (\ref{actionbycenter}) we get that 
    \begin{equation}\label{vb12}
        \begin{split}
            & g_{m*}\cL_m \quad \text{is a rank $2$ vector bundle with local sections} \quad v_2e_3, v_2e_4,\\
            & g_{m*}\cL_m^\vee \quad \text{is a rank $2$ vector bundle with local sections} \quad v_1e_3, v_1e_4.
        \end{split}
    \end{equation}
    Note that $\B_{m,0}\cong g_{m*}\sEnd(\cV_m)$. Hence, $g_{m*}\cL_m$, $g_{m*}\cL_m^\vee$ are subbundles of $\Lambda^2\E\otimes \cL^\vee$ with respective local sections. Since $\cL=\O_{\P^1}(-1)$, we get
    \begin{equation*}
        g_{m*}\cL_m\cong \O_{\P^1}(-1)^2, \quad g_{m*}\cL_m^\vee\cong \O_{\P^1}^2.
    \end{equation*}
    Therefore, $H^\bullet(C_m, \cL_m)=0$ and $H^\bullet(C_m, \cL_m^\vee)=\kk^2$. This implies that $\cL_1\cong \O_{C_1}(-1)$, and $\cL_2\cong \O_{C_2}\{-1,0\}$ or $\O_{C_2}\{0,-1\}$. Up to the involution of $g_2$, we can make $\cL_2 \cong \O_{C_2}\{-1,0\}$.
\end{proof}
\section{Derived Categories of Nodal Quintic Del Pezzo Threefolds}\label{derivedcat}
The goal of the section is to construct a Kawamata type semiorthogonal decomposition of $\D(X_m)$ using the birational morphism $f_m\colon Y_m\to X_m$ and the quadric surface fibration $p_m\colon Y_m\to \P^1$ constructed in \S\ref{geometry}. There are three steps towards this construction.

Firstly, we take the SOD of $\D(Y_m)$ constructed by Kuznetsov \cite{kuzqfib} and try to understand the nontrivial subcategory $\D(\P^1, \B_{m,0})$. The majority of the work in this step goes into the $3$-nodal case $X_3$ and the result is given in Proposition \ref{3rdcase}. 

Secondly, we work out the objects generating each component of the SOD of $\D(Y_m)$ and perform a series of mutations to obtain a new SOD of $\D(Y_m)$ that can be descended to a SOD of $\D(X_m)$; see Corollary \ref{ymsodnew}. This is where the spinor sheaf associated with the smooth section $E$ appears; see Lemma \ref{se}. 

Lastly, Proposition \ref{kawaprop} gives a generalization of the conditions proposed by Kawamata in \cite{kawaodp} for the descent of derived categories, and Lemma \ref{dbr2} checks that these conditions are satisfied. Together with Proposition \ref{pushforward} and \ref{indsod}, we can prove the main result Theorem \ref{mainthm} of the paper.

\medskip
Recall maps $p_m\colon Y_m\to \P^1$, $\pi\colon \P(\E)\to \P^1$ and $i_m\colon Y_m\hookrightarrow \P(\E)$ ($p_m = \pi \circ i_m$) for $m=1,2,3$.
\begin{thm}[{\cite[Theorem 4.2]{kuzqfib}}]
    There are $\P^1$-linear admissible semiorthogonal decompositions
    \begin{equation}\label{ymsod}
        \D(Y_m)=\langle \Phi_{m,l} (\D(\P^1, \B_{m,0})), p_m^*\D(\P^1), p_m^*\D(\P^1)\otimes\O_{Y_m/\P^1}(1) \rangle
    \end{equation}
    where $\D(\P^1, \B_{m,0})$ is the derived category of coherent sheaves on $\P^1$ with right $\B_{m,0}$-module structures. Moreover, $\Phi_{m,l}, l\in\mathbb{Z}$ is the embedding functor 
    \begin{equation}\label{fmphi}
        \Phi_{m, l}= p_m^* (-)\otimes^{\LL}_{p_m^*\B_{m,0}} K_{m, l}\colon \D(\P^1, \B_{m,0})\hookrightarrow \D(Y_m)
    \end{equation}
    where $K_{m, l}$ is a rank $4$ vector bundles on $Y_m$ with left $\B_{m,0}$-module structures defined by the short exact sequence
    \begin{equation}\label{kerses}
        0\to \O_{\P(\E)/\P^1}(-2)\otimes \pi^*(\B_{m,0}\otimes \O_{\P^1}(l)) \xrightarrow{\delta_m} \O_{\P(\E)/\P^1}(-1)\otimes \pi^*(\B_{m,1} \otimes \O_{\P^1}(l)) \to i_{m*} K_{m, l}\to 0. 
    \end{equation}
    Here $\delta_m$ is the multiplication induced by $\delta$ from the right (recall $\delta$ from \S\ref{sec-notation}) and $\delta_m \cdot \delta_m = \sigma_m$ where $\sigma_m$ is the global section defining the total space $Y_m$ of the quadric fibration. 
\end{thm}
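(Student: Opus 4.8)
The plan is to follow Kuznetsov's argument for quadric fibrations \cite{kuzqfib}, specialised to the relative quadric surface $i_m: Y_m\hookrightarrow\P(\E)$ of relative dimension $2$ over $\P^1$; $\P^1$-linearity of every functor involved is built into the constructions. First I would record the Orlov projective bundle decomposition: $D^b(\P(\E))$ admits the semiorthogonal decomposition with blocks $\pi^*D^b(\P^1)\otimes\O_{\P(\E)/\P^1}(j)$, $j=-2,-1,0,1$, in increasing order of $j$. Since the ideal sheaf of $Y_m$ in $\P(\E)$ is $\O_{\P(\E)/\P^1}(-2)\otimes\pi^*\O_{\P^1}(1)$, restricting along $i_m$ and using the Koszul sequence $0\to\O_{\P(\E)/\P^1}(-2)\otimes\pi^*\O_{\P^1}(1)\to\O_{\P(\E)}\to i_{m*}\O_{Y_m}\to 0$ shows that $D^b(Y_m)$ is generated by the four blocks $p_m^*D^b(\P^1)\otimes\O_{Y_m/\P^1}(j)$, $j=-2,-1,0,1$. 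These four blocks do not form a semiorthogonal decomposition of $D^b(Y_m)$; the content of the theorem is that the two blocks with $j=-2,-1$ can be replaced by a single admissible subcategory equivalent to $D^b(\P^1,\B_{m,0})$, embedded via $\Phi_{m,l}$.

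Next I would analyse $\Phi_{m,l}$. Because $\delta_m\cdot\delta_m=\sigma_m$, the map $\delta_m$ in (\ref{kerses}) is an isomorphism over $\P(\E)\setminus Y_m$, so its cokernel $i_{m*}K_{m,l}$ is a sheaf on $Y_m$; functoriality of the Clifford construction makes $K_{m,l}$ a left $\B_{m,0}$-module, so $\Phi_{m,l}=-\otimes^{\LL}_{\B_{m,0}}K_{m,l}$ is a $\P^1$-linear functor $D^b(\P^1,\B_{m,0})\to D^b(Y_m)$ with both adjoints. Feeding the left projective $\B_{m,0}$-modules into $\Phi_{m,l}$ and using (\ref{kerses}) together with the relations coming from the divisor equation identifies the essential image of $\Phi_{m,l}$ with the subcategory of $D^b(Y_m)$ generated by the blocks $p_m^*D^b(\P^1)\otimes\O_{Y_m/\P^1}(j)$, $j=-2,-1$. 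Fully faithfulness of $\Phi_{m,l}$ is a cohomology computation: resolving $K_{m,l}$ by the two-term complex (\ref{kerses}) and applying $\pi_*$ via the projective bundle formula, one obtains $p_{m*}\sHom_{Y_m}(K_{m,l},K_{m,l})\cong\B_{m,0}$ in $D^b(\P^1)$, compatibly with the algebra and bimodule structures, and $\P^1$-linearity upgrades this to $\Hom^\bullet_{Y_m}(\Phi_{m,l}M,\Phi_{m,l}N)\cong\Hom^\bullet_{(\P^1,\B_{m,0})}(M,N)$ for all $M,N$. Fibrewise this is the familiar fact that on a quadric surface $Q\subset\P^3$ of corank $\leqslant2$ the spinor sheaves $S_W,T_W$ of \S\ref{cliffordspinor} associated to the maximal isotropic subspaces of $Q$, together with $\O_Q$ and $\O_Q(1)$, generate $D^b(Q)$ with the required orthogonalities --- Kapranov's exceptional collection when $Q$ is smooth, and via the reflexivity of $S_W,T_W$ from Proposition \ref{spinorprop} in the degenerate cases.

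Finally I would verify semiorthogonality and fullness. For the two structure-sheaf blocks, $\Hom^\bullet_{Y_m}(p_m^*A\otimes\O_{Y_m/\P^1}(1),\,p_m^*B\otimes\O_{Y_m/\P^1})=\Hom^\bullet_{\P^1}(A,\,B\otimes p_{m*}\O_{Y_m/\P^1}(-1))$, and $p_{m*}\O_{Y_m/\P^1}(-1)=0$: twisting the Koszul sequence by $\O_{\P(\E)/\P^1}(-1)$ and applying $\pi_*$ leaves only $\pi_*\O_{\P(\E)/\P^1}(-1)$ and $\pi_*\O_{\P(\E)/\P^1}(-3)$, both of which vanish on a $\P^3$-bundle. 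Orthogonality of $\Phi_{m,l}(D^b(\P^1,\B_{m,0}))$ against these two blocks reduces, via (\ref{kerses}), the projection formula (Lemma \ref{projf} with $\cA_X=\O_{Y_m}$) and base change, to the vanishing of $\pi_*$ of negative-degree relative twists on $\P(\E)$; fullness then follows from the generation established in the first step once the essential image of $\Phi_{m,l}$ has been identified as above. The main obstacle is the middle step --- recognising the leftover category precisely as $D^b(\P^1,\B_{m,0})$ and proving $\Phi_{m,l}$ fully faithful --- which is where Kuznetsov's Clifford-algebra machinery does the work; in the present situation it is comparatively gentle because $\E$ splits and, for $m=1,2$, $p_m$ has only corank $\leqslant1$ fibres, so $\wt{\B_{m,0}}$ is Azumaya on $C_m$ by Lemma \ref{doublecover}, while the unique corank-$2$ fibre occurring for $m=3$ still carries honest reflexive spinor sheaves by Proposition \ref{spinorprop}.
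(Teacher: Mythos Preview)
The paper does not supply its own proof of this theorem: it is quoted verbatim from \cite[Theorem 4.2]{kuzqfib} and used as a black box, with no \texttt{proof} environment following the statement. So there is nothing in the paper to compare your argument against.

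Your sketch is a reasonable outline of Kuznetsov's original proof. The broad strokes --- starting from Orlov's decomposition of $D^b(\P(\E))$, restricting along the divisor $Y_m$, identifying the leftover piece with $D^b(\P^1,\B_{m,0})$ via the Clifford kernel $K_{m,l}$, and checking semiorthogonality by pushing down to $\P^1$ --- are correct. A couple of minor comments: your remarks about the corank of the fibres and the Azumaya property of $\wt{\B_{m,0}}$ are irrelevant to the statement being proved, since Kuznetsov's theorem holds for arbitrary flat quadric fibrations regardless of fibre degeneration; those facts matter later in the paper when one wants to identify $D^b(\P^1,\B_{m,0})$ more concretely, but not here. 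Also, the fully-faithfulness computation you gesture at (showing $p_{m*}\sHom(K_{m,l},K_{m,l})\cong\B_{m,0}$) is the genuine technical core and deserves more than a sentence if you intend this as a self-contained proof rather than a summary of \cite{kuzqfib}.
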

\begin{rem}
   (1) The SOD (\ref{ymsod}) is admissible because $Y_m$ is Gorenstein projective by Proposition \ref{setup} (i) and we can apply Lemma \ref{adsodlem}.

    (2) The map $\delta_m$ in (\ref{kerses}) is defined in the same way as maps in (\ref{spinor}). Thus, up to the twist by $\O_{Y_m/\P^1}(-1)$, the bundle $K_{m,0}$ can be regarded as the spinor sheaf $\cS_0$ associated with the zero isotropic subbundle on $Y_m$. 

    (3) There are equivalences
    \begin{equation}\label{gmeq}
        g_{m*}\colon \D(C_m, \wt{\B_{m,0}}) \xrightarrow{\cong} \D(\P^1, \B_{m,0})
    \end{equation}
    by the definition of $\wt{\B_{m,0}}$. According to Lemma \ref{doublecover} (ii), $\wt{\B_{m,0}}\cong \sEnd(\cV_m)$ for $m=1,2$ are trivial Azumaya algebras. There are equivalences
    \begin{equation*}
        -\otimes \cV_m^\vee\colon \D(C_m) \xrightarrow{\cong} \D(C_m, \wt{\B_{m,0}}), \quad m=1,2.
    \end{equation*} 
\end{rem}
Since $p_3\colon Y_3\to \P^1$ has a fiber of corank $2$, $\wt{\B_{3,0}}$ is not an Azumaya algebra. It is not clear a priori what $\D(C_3, \wt{\B_{3,0}})$ looks like. We study this subcategory below.

\medskip
Let $M$ be a chain of three $\P^1$'s and let $N$ be a chain of two $\P^1$'s. Let $M_i, i=1,2,3$ be the $i$-th component of $M$ and let $N_j, j=1,2$ be the $j$-th component of $N$. Let $h\colon M\to N$ be the map that contracts $M_2$ to the point $N_1\cap N_2$. That is, $h|_{M_1}\colon M_1\to N_1, h|_{M_3}\colon M_3\to N_2$ are identities of $\P^1$ and $h|_{M_2}\colon M_2\to N_1\cap N_2$ is the constant map.

Recall from \S\ref{sec-notation} the notation $\O_M\{d_1,\dots, d_3\}$ for line bundles on $M$. 

\begin{lemma}\label{3to2}
    We have $h_*\O_M\{0,-1,0\}\cong \O_{N_1}(-1)\oplus \O_{N_2}(-1)$ and $h_*\O_M\{0,1,0\}\cong \O_{N_1}\oplus \O_{N_2}$.
\end{lemma}
\begin{proof}
    Let $x\in M_2$ be a smooth point and let $y=N_1\cap N_2$ be the intersection point. There is a short exact sequence
    \begin{equation}\label{ses}
        0\to \O_{M}\{0,-1,0\} \to \O_M \to \O_x \to 0.
    \end{equation}
    Note that $h_*\O_M \cong \O_N$. Applying $h_*$, we have
    \begin{equation*}
        0\to R^0h_*\O_{M}\{0,-1,0\} \to \O_N \to \O_y \to 0
    \end{equation*}
    and $R^ih_*\O_{M}\{0,-1,0\}=0$ for $i>0$. Thus, 
    \begin{equation*}
        h_*\O_{M}\{0,-1,0\}\cong R^0h_*\O_{M}\{0,-1,0\}\cong \O_{N_1}(-1)\oplus \O_{N_2}(-1)
    \end{equation*}
    because it is the ideal of $y$. Tensoring the sequence (\ref{ses}) by $\O_M\{0,1,0\}$ and applying $h_*$, we get
    \begin{equation*}
        0\to \O_N\to R^0h_*\O_{M}\{0,1,0\} \to \O_y \to 0
    \end{equation*}
    and $R^ih_*\O_{M}\{0,1,0\}$ for $i>0$. 

    Let $f\colon \wt{M}\to M$ be the normalization of $M$. Then $\wt{M}\cong \bigsqcup_{i=1}^3 M_i$ and there is a short exact sequence
    \begin{equation*}
        0 \to \O_M\{0,1,0\} \to f_*f^* \O_M\{0,1,0\} \to \O_{x_1}\oplus \O_{x_2} \to 0
    \end{equation*}
    where $x_1, x_2$ are the nodal points of $M$. Note that $f^* \O_M\{0,1,0\} \cong \O_{M_1} \oplus \O_{M_2}(1) \oplus \O_{M_3}$. Applying $h_*$ to the above sequence, we get
    \begin{equation*}
        0\to R^0h_*\O_{M}\{0,1,0\} \to \O_{N_1}\oplus \O_y^2 \oplus \O_{N_2} \to \O_y^2 \to 0
    \end{equation*}
    where the second map is given by natural surjections $\O_{N_i}\to \O_y$ and identities $\O_y\to \O_y$. This implies that
    \begin{equation*}
        h_*\O_M\{0,1,0\}\cong R^0h_*\O_{M}\{0,1,0\} \cong  \O_{N_1}\oplus \O_{N_2}.
    \end{equation*}
\end{proof}

We can embed $N$ into $\P^2$. Let $o\in N\subset \P^2$ be the nodal point of $N$ and let $f\colon \Bl_o \P^2\to \P^2$ be the blow-up of $\P^2$ at $o$. Then $M\cong f^{-1}(N)$ is the total transform of $N$ and $h\colon M\to N$ is the restriction of $f$. This gives the commutative diagram
\begin{equation}\label{blow-up}
    \begin{tikzcd}
        M=f^{-1}(N) \arrow[hook]{r}\arrow{d}{h} & \Bl_o \P^2 \arrow{d}{f}\\
        N \arrow[hook]{r} & \P^2.
    \end{tikzcd}
\end{equation}

Recall from Proposition \ref{setup} (v) that $L_m$ is the Hilbert scheme of lines on $X_m$ containing $a_1$ and $L_m$ can be embedded into $X_m$ with image disjoint from $a_1$. Recall that $g_m\colon C_m\to \P^1$ are double covers ramified along the degeneration locus of $p_m$.

Let $h_m\colon L_m\cong f_m^{-1}(L_m)\hookrightarrow Y_m \xrightarrow{p_m} \P^1$. By Proposition \ref{setup} (v) and Lemma \ref{doublecover} (i) we have $L_m\cong C_m$ for $m=1,2$ and $L_3\cong M, C_3\cong N$. In addition, we get
\begin{equation}\label{hm}
    h_m=
    \left\{
    \begin{array}{ll}
        g_m, & m=1,2\\
        g_3\circ h, & m=3
    \end{array}
    \right.
\end{equation}
where $h$ is the map in (\ref{blow-up}).

Since $\Bl_o \P^2\subset \P^2\times\P^1$, we have an embedding $j\colon L_3\hookrightarrow \P^1_{C_3}$ and commutative diagrams

\begin{equation}\label{l3}
    \begin{tikzcd}
        L_3 \arrow[hook]{r}{j}\arrow[swap]{rd}{h} & \P^1_{C_3} \arrow{d}{\pi_3}\\
        & C_3,
    \end{tikzcd}
    \qquad
    \begin{tikzcd}
        L_3 \arrow{r}{h}\arrow[swap]{rd}{h_3} & C_3 \arrow{d}{g_3}\\
        & \P^1
    \end{tikzcd}    
\end{equation}
where $\pi_3$ is the projection map.

Recall $\E=\O_{\P^1}\oplus \O_{\P^1}(-1)^3$ and $\cL=\O_{\P^1}(-1)$.
\begin{prop}\label{3rdcase}
    Let $\cL_3=\O_{L_3}\{0,-1,0\}$ and $\cV_3=\O_{L_3}\oplus \cL_3$. Let $h\colon L_3\to C_3$ be the map that contracts the middle $\P^1$ to the node of $C_3$. Then

    (i) $h_*\sEnd (\cV_3)\cong \wt{\B_{3,0}}$ as sheaves of $\O_{C_3}$-algebras;

    (ii) $h_*\colon \D(L_3, \sEnd (\cV_3))\xrightarrow{\cong} \D(C_3, \wt{\B_{3,0}})$ is an equivalence.
\end{prop}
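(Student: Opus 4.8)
The plan is to build $\wt{\B_{3,0}}$ explicitly over the chain $C_3 = N$ of two $\P^1$'s, compute $h_*\sEnd(\cV_3)$ directly from the definition of the chain $L_3 = M$ and the contraction $h$, and identify the two as sheaves of $\O_{C_3}$-algebras; then (ii) will follow from the projection formula and Proposition \ref{pushforward}. For (i), I would first use Example \ref{qfex} to get a local model: near the degeneration point $[0:1]\in\P^1$ the quadratic form $\sigma_3$ has the shape $x_{12}x_{34} + x_{14}x_{24}$, $x_{12}x_{35}+x_{14}x_{25}$, i.e.\ the case $h(x_3,x_4)=0$, so $\B_{3,0}$ locally decomposes as in equation (5.?) of the example ($Z_q v_1v_2 \oplus Z_q v_2v_1 \oplus Z_q v_1 e_3 \oplus Z_q v_1 e_4 \oplus Z_q v_2 e_3 \oplus Z_q v_2 e_4$), and away from that point $p_3$ has corank $\leqslant 1$ so $\wt{\B_{3,0}}$ is Azumaya there. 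The key computation is then that $h_*\sEnd(\cV_3)$ splits, via the decomposition $\cV_3 = \O_{L_3}\oplus \cL_3$ with $\cL_3 = \O_{L_3}\{0,-1,0\}$, into the four pieces $h_*\O_{L_3}$, $h_*\cL_3$, $h_*\cL_3^*$, $h_*\sEnd(\cL_3)\cong h_*\O_{L_3}$; Lemma \ref{3to2} computes three of these ($h_*\O_{L_3}=\O_{C_3}$ by the contraction, $h_*\cL_3 \cong \O_{N_1}(-1)\oplus\O_{N_2}(-1)$, and $h_*\cL_3^* = h_*\O_{L_3}\{0,1,0\}\cong \O_{N_1}\oplus\O_{N_2}$). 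Matching these four summands against the local description of $\wt{\B_{3,0}}$ — using that $\cL_3$ plays the role of the ``off-diagonal'' module generated by $v_2 e_3, v_2 e_4$ as in the proof of Lemma \ref{doublecover}(ii), and $\Z_3 = \O_{\P^1}\oplus\O_{\P^1}(-1)$ pushes up from $C_3$ — should pin down the algebra structure, with the crucial point being that the multiplication $\cL_3 \otimes \cL_3^* \to \O_{L_3}$ vanishes at the node of $C_3$ (which matches $\alpha_i\beta_i = 0$ type relations and corresponds to the corank-$2$ degeneration). One needs to check the $\O_{C_3}$-algebra structure globally, not just summand ranks; I expect to verify this by a local calculation at the node using the explicit generators, and elsewhere by the Azumaya identification of Lemma \ref{doublecover}.

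For part (ii), I would apply Proposition \ref{pushforward} in its noncommutative incarnation: $h\colon L_3\to C_3$ is a proper birational morphism of the underlying schemes with one-dimensional fibers and $h_*\O_{L_3}=\O_{C_3}$, so $h_*\colon D^b(L_3)\to D^b(C_3)$ is essentially surjective with kernel generated by $l_*\O_{M_2}(-1)$ for the contracted component $M_2$. The claim is that after twisting by the module structures this becomes an equivalence $D^b(L_3,\sEnd(\cV_3))\to D^b(C_3,\wt{\B_{3,0}})$. Via the Morita equivalence $-\otimes\cV_3^*\colon D^b(L_3)\overset{\simeq}{\to}D^b(L_3,\sEnd(\cV_3))$ and part (i), the functor $h_*$ between the module categories corresponds, up to these equivalences, to a push-forward functor whose kernel object is $l_*\O_{M_2}(-1)\otimes\cV_3^*$; the point is that this object is \emph{zero} in the relevant category because on the contracted $\P^1 = M_2$ the bundle $\cV_3 = \O\oplus\cL_3$ restricts to $\O_{M_2}\oplus\O_{M_2}(-1)$, so $\O_{M_2}(-1)\otimes\cV_3^*\cong\O_{M_2}(-1)\oplus\O_{M_2}$, which after passing to $\wt{\B_{3,0}}$-modules over $C_3$ becomes an object with no higher cohomology supported at the node — concretely one checks $h_*$ kills nothing on that side, so the kernel collapses and $h_*$ is an equivalence. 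The projection formula of Lemma \ref{projf} is what legitimizes moving the module structures through $h_*$ here.

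The main obstacle, as the excerpt itself flags, is part (i): proving the precise isomorphism of sheaves of \emph{algebras} $h_*\sEnd(\cV_3)\cong\wt{\B_{3,0}}$, not merely an isomorphism of $\O_{C_3}$-modules of the right ranks. The subtlety is entirely at the node $y = N_1\cap N_2$ of $C_3$, lying over the corank-$2$ fiber of $p_3$: away from $y$ both algebras are Azumaya (sheaves of $2\times 2$ matrices), and the identification there is forced by Lemma \ref{doublecover}-type reasoning, but at $y$ the stalk of $\wt{\B_{3,0}}$ is a non-maximal order and one must check that the contraction $h$ produces exactly this order. I would handle this by writing down explicit local generators of $\wt{\B_{3,0}}$ at $y$ from $\sigma_3$ (the $v_1v_2, v_2v_1, v_1 e_j, v_2 e_j$ of Example \ref{qfex}, organized over $\Z_3$) and matching them with local generators of $h_*\sEnd(\cV_3)$ coming from the endomorphisms of $\O\oplus\cL_3$ near the preimage $h^{-1}(y) = M_2$; the relations $\alpha_i\beta_i=0=\beta_i\alpha_i$ governing $R_3$ (Notation \ref{rn}) will appear precisely because a morphism factoring through the contracted component composes to zero, and verifying this compatibility of multiplications is the technical heart of the proof.
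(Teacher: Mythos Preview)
Your plan for (i) is close to the paper's, and the module-level computation via Lemma~\ref{3to2} is exactly right. The gap is in how you promote the $\O_{C_3}$-module isomorphism to an algebra isomorphism: ``matching local generators'' does not by itself produce a global algebra map, and checking multiplication tables at the node is delicate because you must know the local matching glues. The paper sidesteps this by first constructing an honest algebra homomorphism: one gives $\cV_3$ a left $L_0h^*\wt{\B_{3,0}}$-module structure (the generators $v_1v_2, v_2v_1$ act as the two idempotents of $\O_{L_3}\oplus\cL_3$, and $v_1e_j, v_2e_j$ act as the off-diagonal maps), which yields $\alpha\colon L_0h^*\wt{\B_{3,0}}\to\sEnd(\cV_3)$ and by adjunction $\beta\colon \wt{\B_{3,0}}\to h_*\sEnd(\cV_3)$. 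Then one only has to check $\beta$ is an isomorphism of $\O$-modules, which follows because $g_{3*}\beta$ is the module isomorphism you already computed and $g_3$ is finite. This is both shorter and avoids any local-to-global gluing issue.

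Your argument for (ii) has a genuine gap. Proposition~\ref{pushforward} is a statement about commutative schemes and has no ``noncommutative incarnation'' in the paper; you cannot simply transport its kernel description through Morita. Concretely, your claim that the kernel object $\iota_*\O_{M_2}(-1)\otimes\cV_3^*$ is ``zero in the relevant category'' is false: since $\cV_3^*|_{M_2}\cong\O_{M_2}\oplus\O_{M_2}(1)$, one has $h_*(\O_{M_2}(-1)\otimes\cV_3^*)\cong h_*\O_{M_2}(-1)\oplus h_*\O_{M_2}\cong 0\oplus\O_y\neq 0$. And even if that object were not killed by $h_*$, this would not show $\ker(h_*)=0$: you would only know that one particular object is not in the kernel, not that the kernel is trivial. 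The paper proceeds differently. It sets up the adjoint pair $(H^*,H_*)$ for the morphism $H=(h,\alpha)$ of noncommutative schemes, uses the projection formula (Lemma~\ref{projf}) together with (i) to get $H_*H^*\cong\id$ on $D^-$, and then proves $\ker(H_*)=0$ directly: if $h_*F=0$ then (via a spectral sequence) one may take $F\in\Coh$, it must be supported on $M_2$, and Morita \emph{on the fiber} $M_2$ (where $\sEnd(\cV_3)|_{M_2}\cong\sEnd(\O\oplus\O(-1))$) writes $F=F_2\otimes(\O_{M_2}\oplus\O_{M_2}(1))$; then $h_*F=0$ forces $H^\bullet(M_2,F_2)=H^\bullet(M_2,F_2(1))=0$, hence $F_2=0$. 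A cone argument then gives $H^*H_*\cong\id$ on $D^-$, and a short boundedness check finishes the equivalence on $D^b$. Your intuition that the twist by $\cV_3$ on $M_2$ is what makes the kernel disappear is correct, but it enters through this fiberwise Morita step, not through Proposition~\ref{pushforward}.
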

\begin{proof}
    (ii) We first prove (ii) assuming (i). Since $h$ can be obtained as the restriction of the blow-up map $f$ in (\ref{blow-up}) as well as the projection map $\pi_3$ in (\ref{l3}), we get $\det(\cV_3)\cong\cL_3\cong j^*\O_{\pi_3}(-1)$, which is relative very anti-ample. Thus, $\cV_3^\vee$ is relative base-point free and $\det(\cV_3^\vee)$ is relative ample. Moreover, $\cV_3^\vee$ has a direct summand $\O_{\cL_3}$ and $H^1(L_3, \cV_3)=0$. This implies that $\cV_3$ is a local projective generator in ${}^0\mathrm{Per}(L_3/C_3)$ by \cite[Proposition 3.2.7]{vdb-flop} and $h_*$ is an equivalence by \cite[Proposition 3.3.1]{vdb-flop}.

    (i) We see from (\ref{quad-eq}) that similarly to cases of $Y_1, Y_2$, quadratic equations of $Y_3$ can also be studied using Example \ref{qfex}. We will adopt the notations in this example and apply similar arguments used in Lemma \ref{doublecover} (ii).

    If we view $q(x)$ as the $C_3$-family of quadratic equations obtained from pulling back $p_3\colon Y_3 \to \P^1$ along $g_3\colon C_3\to \P^1$, then for every vector $v$ we can regard $q(v)\colon C_3\to \kk$ as a function on closed points of $C_3$. Denote the node of $C_3$ by $o$. Note that the fiber of $Y_3\times_{\P^1} C_3\to C_3$ has corank $2$ at $o\in C_3$ and is smooth otherwise. Thus, for $i=3,4$ and $x\in C_3$, we have
    \begin{equation*}
        q(e_i)(x)\neq 0 \; \text{if} \; x\neq o, \quad q(e_i)(o)=0.
    \end{equation*}

    Since $q'(x_3, x_4)=0$ at $o\in C_3$, we get from (\ref{corank2}) that $\wt{\B_{3,0}}$ is generated by local sections $v_1v_2, v_1e_3, v_1e_4, v_2v_1, v_2e_3, v_2e_4$. Its algebra structure is described in the list below. The list only includes the multiplication of any two of these local sections that may be non-zero.
    \begin{equation*}
        \left\{
        \begin{array}{l}
             (v_1v_2)^2=v_1v_2, \; (v_1v_2)(v_1e_i)=v_1e_i,\\
             (v_2v_1)^2=v_2v_1,\; (v_2v_1)(v_2e_i)= v_2e_i,\\
             (v_1e_i)(v_2e_i)=-q(e_i)v_1v_2, \; (v_2e_i)(v_1e_i)=-q(e_i)v_2v_1,
        \end{array}
        \right. \quad i=3,4.
    \end{equation*}

    On the other hand, we will use Lemma \ref{3to2} to study the algebra structure of $h_* \sEnd (\cV_3)$. Remember $L_3=M$ and $C_3=N$. We take local sections of $h_* \sEnd (\cV_3)$ as below.  
    \begin{equation}\label{end-sec}
        \begin{split}
            & h_*\sHom(\O_{L_3}, \O_{L_3})\cong \O_N \quad \text{has a section} \; \id_{\O_{L_3}},\\
            & h_*\sHom(\cL_3, \cL_3)\cong \O_N \quad \text{has a section} \; \id_{\cL_3}, \\
            & h_*\sHom(\cL_3, \O_{L_3})\cong h_*\O_M\{0,1,0\}\cong \O_{N_1}\oplus \O_{N_2} \quad \text{has a section} \;  s_k, \quad k=1,2
        \end{split}
    \end{equation}
    such that $s_k$ is the constant function on $N_k$ with value $1$. There are short exact sequences
    \begin{equation*}
        \begin{split}
            & 0\to \O_{N_1}(-1)\xrightarrow{a_1} \O_N \xrightarrow{s_2} \O_{N_2}\to 0,\\
            & 0\to \O_{N_2}(-1)\xrightarrow{a_2} \O_N \xrightarrow{s_1} \O_{N_1}\to 0.
            \end{split}
    \end{equation*}
    Thus, for $k=1,2$ we can consider $a_k$ above as a local section of the ideal sheaf $\O_{N_k}(-1)$ and get
    \begin{equation*}
        N_2=\{a_1=0\}\subset N, \quad N_1=\{a_2=0\} \subset N.
    \end{equation*}
    This means that
    \begin{equation*}
        h_*\sHom(\O_{L_3}, \cL_3)\cong h_*\O_M\{0,-1,0\}\cong \O_{N_1}(-1)\oplus \O_{N_2}(-1) \; \text{has a local section } \;  a_k, \; k=1,2.
    \end{equation*}
    There are relations
    \begin{equation*}
        s_k\circ a_k = \delta_o \id_{\O_{L_3}}, \quad a_k\circ s_k =  \delta_o \id_{\cL_3}, \quad k=1,2
    \end{equation*}
    where $\delta_o\colon C_3\to \kk$ is a function satisfying $\delta_o(x)\neq 0$ if $x\neq o$ and $\delta_o(o)=0$.

    Now we will construct an algebra homomorphism from $\wt{\B_{3,0}}$ to $h_* \sEnd (\cV_3)$. Let 
    \begin{equation*}
        \beta\colon \wt{\B_{3,0}}\to h_* \sEnd (\cV_3)
    \end{equation*}
    be the $\O_{C_3}$-homomorphism such that
    \begin{equation}\label{beta-map}
        \beta(v_1v_2)=\id_{\O_{L_3}}, \quad \beta(v_1e_i)=s_{i-2}, \quad \beta(v_2e_i)=a_{i-2}, \quad i=3,4.
    \end{equation}
    Note that $v_2v_1=1-v_1v_2$ and this implies $\beta(v_2v_1)=\id_{\cL_3}$. From the discussion of the algebra structures on both sides, we get that $\beta$ is an algebra homomorphism up to multiplying $a_1, a_2$ with a nowhere zero function on $C_3$. 

    We claim that $\beta$ is an isomorphism. Note that
    \begin{equation*}
        \begin{split}
            & g_{3*}\wt{\B_{3,0}}\cong \B_{3,0} \cong \O_{\P^1}\oplus (\Lambda^2\E \otimes \cL^\vee) \oplus (\Lambda^4\E \otimes (\cL^2)^\vee) \cong \O_{\P^1}^4 \oplus \O_{\P^1}(-1)^4,\\
            & g_{3*}\O_{C_3} \cong \Z_3\cong \O_{\P^1}\oplus (\Lambda^4\E \otimes (\cL^2)^\vee) \cong \O_{\P^1}\oplus \O_{\P^1}(-1).
        \end{split}
    \end{equation*}
    Using the convention (\ref{basis}) for local sections $v_1, v_2, e_3, e_4$ of $\E$, we can describe the corresponding local section for each direct summand of $\B_{m,0}$ and the table of its local sections is given as below.
    \begin{equation}\label{tab-ls}
        \begin{array}{c|c|c|c|c|c|c|c|c}
            \text{direct summand} & \O_{\P^1} & \O_{\P^1} & \O_{\P^1} & \O_{\P^1} & \O_{\P^1}(-1) & \O_{\P^1}(-1) & \O_{\P^1}(-1) & \O_{\P^1}(-1)\\
        \hline
            \text{local section} & v_1v_2 & v_1e_3 & v_1e_4 & v_2v_1 & v_2e_3 & v_2e_4 & v_2v_1e_3e_4 & v_1v_2e_3e_4
        \end{array}
    \end{equation}
    Lemma \ref{3to2} implies that
    \begin{equation*}
        g_{3*}h_* \sEnd (\cV_3)\cong \O_{\P^1}^4 \oplus \O_{\P^1}(-1)^4
    \end{equation*}
    and one can use (\ref{tab-ls}) to check that $g_{3*}\beta$ is an isomorphism. Since $g_3$ is finite, $\beta$ is also an isomorphism. 
\end{proof}

Recall the definition of $h_m$ from (\ref{hm}). From Lemma \ref{doublecover} (ii) and Proposition \ref{3rdcase} (i) we get
\begin{equation}\label{end-v}
    \B_{m,0}\cong g_{m*}\wt{\B_{m,0}} \cong h_{m*}\sEnd(\cV_m), \quad m=1,2,3.
\end{equation}
Combining functors (\ref{fmphi}) (\ref{gmeq}) and Proposition \ref{3rdcase} (ii), we have for $m=1,2,3$ the fully faithful embedding
\begin{equation}\label{fmpsi}
    \begin{tikzcd}
        \Psi_{m,l}\colon \D(L_m) \arrow{r}{-\otimes \cV_m^\vee}[swap]{\cong} & \D(L_m, \sEnd(\cV_m)) \arrow{r}{h_{m*}}[swap]{\cong} & \D(\P^1, \B_{m,0}) \arrow[hook]{r}{\Phi_{m,l}} & \D(Y_m).
    \end{tikzcd}
\end{equation}

Recall maps $p_m\colon Y_m\to \P^1$, $\pi\colon \P(\E)\to \P^1$ and $i_m\colon Y_m\hookrightarrow \P(\E)$ ($p_m = \pi \circ i_m$). We know from Proposition \ref{setup} (iii) that $\O_{\P^1}\subset \E$ is the isotropic sub line bundle corresponding to the smooth section $E=\P(\O_{\P^1})$ of $p_m$. Its associated spinor sheaf $\cS_E$ is a rank $2$ vector bundle constructed by the short exact sequence
\begin{equation}\label{spinor-e}
    0\to \O_{\P(\E)/S}(-1)\otimes \pi^*\cI_0\xrightarrow{\delta_m} \pi^* \cI_1\to i_{m*}\cS_E\to 0
\end{equation}
where $\cI_0\subset \B_{m,0}$ and $\cI_1\subset \B_{m,1}$ are right modules over $\B_{m,0}$ generated by $\O_{\P^1}$, respectively.

\begin{lemma}\label{se}
    Write $\cS_E(-1)$ for $\cS_E \otimes \O_{Y_m/\P^1}(-1)$. We have

    (i) $\Psi_{m,l}(\O_{L_m}) \cong \cS_E(-1) \otimes p_m^*\O_{\P^1}(l)$;

    (ii) $\det(\cS_E^\vee) \cong \O_{Y_m/\P^1}(-1)\otimes p_m^*\O_{\P^1}(2)$ and $\cS_E^\vee \cong \cS_E(-1) \otimes p_m^*\O_{\P^1}(2)$.
\end{lemma}
\begin{proof}
    (i) The isomorphisms in (\ref{end-v}) imply that $h_{m*}(\cV_m^\vee)$ is a right module over $\B_{m,0}$. Since $\cV_m=\O_{L_m}\oplus \cL_m$, we have $\cV_m^\vee\subset \sEnd(\cV_m)$ and thus $h_{m*}(\cV_m^\vee)\subset \B_{m,0}$.  We claim $h_{m*}(\cV_m^\vee)\cong \cI_0$.

    To prove the claim we adopt the notations in Example \ref{qfex} and use the convention (\ref{basis}). Note that $v_1$ is the local section of $\O_{\P^1}\subset \E$. The claim would follow if we can prove that $h_{m*}(\cV_m^\vee)$ is a rank $4$ vector bundle with local sections $v_1v_2, v_1e_3, v_1e_4, v_1v_2e_3e_4$. For $m=1,2$, we have $L_m=C_m, g_m=h_m$ and we get from (\ref{matrix}) that
    \begin{equation*}
        \sHom(\O_{L_m}, \O_{L_m}) \quad \text{has a local section} \quad v_1v_2.
    \end{equation*}
    From (\ref{actionbycenter}) we get that 
    \begin{equation*}
        h_{m*}\sHom(\O_{L_m}, \O_{L_m}) \quad \text{is a rank $2$ vector bundle with local sections} \quad v_1v_2, v_1v_2e_3e_4.
    \end{equation*}
    Recall from (\ref{vb12}) that
    \begin{equation*}
        h_{m*}\sHom(\cL_m, \O_{C_m}) \quad \text{is a rank $2$ vector bundle with local sections} \quad v_1e_3, v_1e_4.
    \end{equation*}
    This concludes the cases of $m=1,2$. For $m=3$, we get from (\ref{end-sec})(\ref{beta-map}) that
    \begin{equation*}
        h_*\sEnd(\cV_3, \O_{L_3})\cong \O_N\oplus\O_{N_1}\oplus \O_{N_2} \quad \text{has local sections} \quad v_1v_2, v_1e_3, v_3e_4
    \end{equation*}
    where $N=C_3$ and $N_1, N_2$ are irreducible components of $N$. Recall $h_3=g_3\circ h$. We deduce from (\ref{actionbycenter}) that $h_{3*}(\cV_3^\vee)$ is a rank $4$ vector bundle with local sections $v_1v_2, v_1e_3, v_1e_4, v_1v_2e_3e_4$.

    Note that $\cI_0\otimes_{\B_{m,0}} \B_{m,1} \cong \cI_1$ where $\cI_1\subset\B_{m,1}$ is the right module over $\B_{m,0}$ generated by $\O_{\P^1}\subset \E$. By (\ref{fmphi})(\ref{kerses}) $\Phi_{m,l}(\cI_0) = p_m^*\cI_0 \otimes^{\LL}_{p_m^*\B_{m,0}} K_{m,l}$ fits into the short exact sequence
    \begin{equation*}
    0\to \O_{\P(\E)/\P^1}(-2)\otimes \pi^*(\cI_0\otimes \O_{\P^1}(l)) \xrightarrow{\delta_m} \O_{\P(\E)/\P^1}(-1)\otimes \pi^*(\cI_1 \otimes \O_{\P^1}(l)) \to i_{m*} \Phi_{m,l}(\cI_0) \to 0. 
    \end{equation*}
    Comparing it with (\ref{spinor-e}), we get $\Psi_{m,l}(\O_{L_m}) \cong \Phi_{m,l}(\cI_0) \cong \cS_E(-1) \otimes p_m^*\O_{\P^1}(l)$. 

    (ii) We can apply Proposition \ref{spinorsheaves} (ii) by letting $\cV=\E=\O_{\P^1}\oplus \O_{\P^1}(-1)^3$, $\sL=\cL=\O_{\P^1}(-1)$, $\cN=\O_{\P^1}$ and $F=E$. Then there are isomorphisms
    \begin{equation*}
        \cS_E^\vee \cong \cS_E(-1) \otimes p_m^*(\O_{\P^1}^\vee \otimes \det(\E^\vee)\otimes \cL) \cong \cS_E(-1) \otimes p_m^*\O_{\P^1}(2).
    \end{equation*}
    This gives $\det(\cS_E^\vee)^2\cong \O_{Y_m/\P^1}(-2)\otimes p_m^*\O_{\P^1}(4)$. On the other hand, the proof of Proposition \ref{spinorsheaves} (i) implies that $\det(\cS_E^\vee)$ restricted to each fiber of $p_m\colon Y_m\to \P^1$ is $\O(-1)$. Thus, 
    \begin{equation*}
        \det(\cS_E^\vee) \cong \O_{Y_m/\P^1}(-1)\otimes p_m^* M
    \end{equation*}
    for some line bundle $M$ on $\P^1$ and it follows that $M\cong\O_{\P^1}(2)$.
\end{proof}

\begin{cor}\label{ymsodnew}
    Write $\cS_E(-1)$ for $\cS_E\otimes \O_{Y_m/\P^1}(-1)$. For $m=1,2,3$ there is an admissible semiorthogonal decomposition
    \begin{equation*}
        \D(Y_m) = \langle  \D(R_{m-1}),  \O_{Y_m/\P^1}(-1)\otimes p_m^*\O_{\P^1}(1), \cS_E(-1) \otimes p_m^*\O_{\P^1}(1), p_m^*\O_{\P^1}(-1), \O_{Y_m},  \O_{Y_m/\P^1}(1) \rangle
    \end{equation*}
    where $R_0=\kk$ and $R_1, R_2$ are the path algebras defined by (\ref{rn}).
\end{cor}
\begin{proof}
    The semiorthogonal decomposition (\ref{ymsod}) when $l=1$ reads as
    \begin{equation*}
        \D(Y_m)= \langle \Phi_{m,1} (\D(\P^1, \B_{m,0})), p_m^*\O_{\P^1}(-1), \O_{Y_m},  \O_{Y_m/\P^1}(1), \O_{Y_m/\P^1}(1)\otimes p_m^*\O_{\P^1}(1)\rangle.    
    \end{equation*}
    By the construction of $\Psi_{m,1}$ from (\ref{fmpsi}), we have $\Psi_{m, 1}(\D(L_m))\cong \Phi_{m,1} (\D(\P^1, \B_{m,0}))$. Hence, the above semiorthogonal decomposition is equivalent to
    \begin{equation*}
        \begin{split}
            & \langle \Psi_{m, 1}(\D(L_m)), p_m^*\O_{\P^1}(-1), \O_{Y_m},  \O_{Y_m/\P^1}(1), \O_{Y_m/\P^1}(1)\otimes p_m^*\O_{\P^1}(1) \rangle\\
            \cong & \langle  \D(R_{m-1}), \cS_E(-1) \otimes p_m^*\O_{\P^1}(1), p_m^*\O_{\P^1}(-1), \O_{Y_m},  \O_{Y_m/\P^1}(1), \O_{Y_m/\P^1}(1)\otimes p_m^*\O_{\P^1}(1) \rangle\\
            \cong & \langle  \O_{Y_m}(-1)\otimes p_m^*\O_{\P^1}(1), \D(R_{m-1}), \cS_E(-1) \otimes p_m^*\O_{\P^1}(1), p_m^*\O_{\P^1}(-1), \O_{Y_m},  \O_{Y_m/\P^1}(1) \rangle\\
            \cong & \langle  \D(R_{m-1}),  \O_{Y_m/\P^1}(-1)\otimes p_m^*\O_{\P^1}(1), \cS_E(-1) \otimes p_m^*\O_{\P^1}(1), p_m^*\O_{\P^1}(-1), \O_{Y_m},  \O_{Y_m/\P^1}(1) \rangle.
        \end{split}
    \end{equation*}
    The first equivalence above is due to Proposition \ref{chainp1} and Lemma \ref{se} (i). For the second equivalence we use Lemma \ref{adsodlem} with $X=Y_m$ and $S=\Spec \kk$. By Proposition \ref{setup} (i)(iv), $Y_m$ is Gorenstein projective and $\omega_{Y_m}\cong \O_{Y_m/\P^1}(-2)$. Then the second equivalence is obtained by applying the functor $-\otimes \omega_{Y_m}\cong -\otimes \O_{Y_m/\P^1}(-2)$ to $\O_{Y_m/\P^1}(1)\otimes p_m^*\O_{\P^1}(1)$. The final equivalence is obtained by the left mutation of $\D(R_{m-1})$ through $\O_{Y_m/\P^1}(-1)\otimes p_m^*\O_{\P^1}(1)$. Again by Lemma \ref{adsodlem} all SODs above are admissible when they exist. The final mutation exists because the SOD before the mutation is admissible, and the mutation induces equivalence of categories before and after the mutation.
\end{proof}

Recall maps $p_m\colon Y_m\to \P^1$ and $f_m\colon Y_m\to X_m$. In the next lemma we check that Proposition \ref{kawaprop} can be applied to the map $f_m$.
\begin{lemma}\label{dbr2}
    Write $M_1=\O(D_1)=\O_{Y_m}(-1)\otimes p_m^*\O_{\P^1}(1)$ and $M_2=\O(D_2)=p_m^*\O_{\P^1}(-1)$. Then

    (1) $D_1.E=1$ and $D_2.E=-1$;

    (2) $\{N_i\coloneqq f_{m*}M_i\}_{i=1,2}$ is a simple collection of sheaves; i.e., $\dim \Hom(N_j, N_k)=\delta_{jk}$ for $1\leqslant j, k\leqslant 2$;

    (3) $H^p(X_m, R^0f_{m*}\O(D_i-D_j))=0$ for all $p>0$ and $1\leqslant i, j\leqslant 2$;

    (4) the triangulated subcategory $\langle N_i\rangle_{i=1}^2$ of $\D(X_m)$ generated by $N_1, N_2$ is equivalent to $\D(R_1)$ where $R_1$ is the path algebra defined by (\ref{rn}).
\end{lemma}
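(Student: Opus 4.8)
The plan is to verify (1)--(3) by explicit cohomology computations on the quadric fibration $p_m$ and on the formal neighbourhood of the exceptional curve $E$, and then to deduce (4) by invoking \cite[Theorem 6.1]{kawaodp}, of which (1)--(3) are precisely the hypotheses. For (1), recall from Proposition \ref{setup}(ii)--(iii) that $E$ is a section of $p_m$ and that $\O_{Y_m}(1)=\O_{Y_m/\P^1}(1)=f_m^*\O_{X_m}(1)$. Since $f_m$ contracts $E$ to the point $a_1$ we get $\O_{Y_m}(1)\cdot E=0$, whereas $p_m^*\O_{\P^1}(1)\cdot E=1$ because $p_m|_E\colon E\to\P^1$ is an isomorphism. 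Hence $D_1\cdot E=-\O_{Y_m}(1)\cdot E+p_m^*\O_{\P^1}(1)\cdot E=1$ and $D_2\cdot E=-p_m^*\O_{\P^1}(1)\cdot E=-1$.

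For (2), note first that (1) gives $M_1|_E\cong\O_E(1)$ and $M_2|_E\cong\O_E(-1)$. By Proposition \ref{setup}(iv) the curve $E$ lies in the smooth locus of $Y_m$ and is a $(-1,-1)$-curve, so the graded pieces of the $\mathcal{I}_E$-adic filtration on the infinitesimal neighbourhoods of $E$ are $\Sym^j(N_{E/Y_m}^*)\cong\O_E(j)^{\oplus(j+1)}$; since $H^1(E,M_i|_E\otimes\O_E(j))=0$ for all $j\geqslant0$ and $i=1,2$, the theorem on formal functions gives $R^1f_{m*}M_i=0$, and as the fibres of $f_m$ have dimension $\leqslant1$ we conclude that $N_i=f_{m*}M_i$ is a sheaf. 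Since $f_m$ restricts to an isomorphism over $X_m\setminus\{a_1\}$, whose complement has codimension three, and $X_m$ is normal, $N_i$ is the pushforward of a line bundle along an open immersion with complement of codimension $\geqslant2$, hence is reflexive; likewise $R^0f_{m*}$ of any line bundle on $Y_m$ is reflexive. Consequently $\sHom(N_i,N_j)$ is reflexive, its global sections over $X_m$ equal its sections over $X_m\setminus\{a_1\}$, and these transport to $H^0(Y_m\setminus E,M_j\otimes M_i^{-1})=H^0(Y_m,M_j\otimes M_i^{-1})$. A short computation on the quadric fibration, pushing the resolution $0\to\O_{\P(\E)/\P^1}(-2)\otimes\pi^*\O_{\P^1}(1)\to\O_{\P(\E)/\P^1}\to i_{m*}\O_{Y_m}\to0$ down to $\P^1$ after suitable twists, gives $p_{m*}\O_{Y_m/\P^1}(1)=\E^*=\O_{\P^1}\oplus\O_{\P^1}(1)^{\oplus3}$ and $p_{m*}\O_{Y_m/\P^1}(-1)=0$, whence $H^0(Y_m,M_j\otimes M_i^{-1})=\kk$ for $i=j$ and $0$ otherwise. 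This is the simple collection.

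For (3), the diagonal case is immediate: $R^0f_{m*}\O(D_i-D_i)=R^0f_{m*}\O_{Y_m}=\O_{X_m}$ and $H^{>0}(X_m,\O_{X_m})=0$ by Kawamata--Viehweg vanishing on the Fano threefold $X_m$. For $(i,j)=(1,2)$ one has $\O(D_1-D_2)=M_1\otimes M_2^{-1}=\O_{Y_m/\P^1}(-1)\otimes p_m^*\O_{\P^1}(2)$, which restricts to $\O_E(2)$ on $E$, so $R^1f_{m*}(M_1\otimes M_2^{-1})=0$ by the formal-functions argument and $H^p(X_m,R^0f_{m*}(M_1\otimes M_2^{-1}))=H^p(Y_m,M_1\otimes M_2^{-1})=H^p(\P^1,p_{m*}\O_{Y_m/\P^1}(-1)\otimes\O_{\P^1}(2))=0$ for all $p$. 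The case $(i,j)=(2,1)$ is the one I expect to be the main obstacle. Here $\O(D_2-D_1)=M_2\otimes M_1^{-1}=\O_{Y_m/\P^1}(1)\otimes p_m^*\O_{\P^1}(-2)$ restricts to $\O_E(-2)$ on $E$, so the $j=0$ graded piece contributes $H^1(E,\O_E(-2))=\kk$ and $R^1f_{m*}(M_2\otimes M_1^{-1})$ is now a nonzero (length one) skyscraper at $a_1$; meanwhile $H^\bullet(Y_m,M_2\otimes M_1^{-1})=H^\bullet(\P^1,\E^*\otimes\O_{\P^1}(-2))$ is one-dimensional, concentrated in degree $1$. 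The Leray spectral sequence for $f_m$ yields $H^p(X_m,R^0f_{m*}(M_2\otimes M_1^{-1}))=0$ for $p\geqslant2$ at once, and reduces the vanishing of $H^1$ to showing that the edge map $H^1(Y_m,M_2\otimes M_1^{-1})\to H^0(X_m,R^1f_{m*}(M_2\otimes M_1^{-1}))$ is an isomorphism (equivalently, that $H^2(X_m,R^0f_{m*}(M_2\otimes M_1^{-1}))=0$). I would establish this either by an explicit local computation near $a_1$ identifying the two one-dimensional groups, or by Grothendieck--Verdier duality for $f_m$ together with an auxiliary vanishing on the quadric fibration ($H^1(Y_m,\O_{Y_m/\P^1}(-3)\otimes p_m^*\O_{\P^1}(2))=0$, which holds because $p_{m*}\O_{Y_m/\P^1}(-3)$ is concentrated in cohomological degree $2$); pinning down that the relevant connecting map is nonzero is the delicate point of the whole lemma.

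Finally, (4) follows from \cite[Theorem 6.1]{kawaodp}, applied to the ordinary double point $a_1\in X_m$, the small contraction $f_m\colon Y_m\to X_m$ with exceptional curve $E$, and the divisors $D_1,D_2$: parts (1), (2) and (3) are exactly its hypotheses (the intersection pattern, the simple collection, and the vanishing of the higher cohomology of the sheaves $R^0f_{m*}\O(D_i-D_j)$), and its conclusion identifies the triangulated subcategory of $D^b(X_m)$ generated by $N_1=f_{m*}M_1$ and $N_2=f_{m*}M_2$ with $D^b(R_2)$, with $N_1,N_2$ corresponding to the two simple $R_2$-modules.
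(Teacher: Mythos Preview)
Your approach matches the paper's step for step: formal functions on the $(-1,-1)$-curve $E$ to control $R^qf_{m*}$, the codimension-two/reflexivity trick to reduce $\Hom(N_i,N_j)$ to $H^0(Y_m,M_j\otimes M_i^{-1})$, push-forward along $p_m$ to evaluate those, and then Leray for $f_m$ in part (3). Two points deserve comment.

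\emph{The delicate step in (3).} You are right to flag $(i,j)=(2,1)$, and in fact the paper glosses over exactly this. It simply asserts short exact sequences
\[
0\to H^p(X_m,R^0f_{m*}\O(D_i-D_j))\to H^p(Y_m,\O(D_i-D_j))\to H^{p-1}(X_m,R^1f_{m*}\O(D_i-D_j))\to 0,
\]
but for $(i,j)=(2,1)$ and $p\in\{1,2\}$ this is equivalent to the vanishing of the Leray differential $d_2\colon H^0(R^1f_{m*})\to H^2(R^0f_{m*})$, which is precisely what has to be shown; the long exact sequence by itself only yields $h^1(R^0f_{m*})=h^2(R^0f_{m*})$. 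Your two suggested routes are heavier than necessary. The cleanest argument is to identify the edge map $H^1(Y_m,\cL)\to H^0(X_m,R^1f_{m*}\cL)$ (with $\cL=\O(D_2-D_1)$) with the restriction $H^1(Y_m,\cL)\to H^1(E,\cL|_E)$ and compute the latter via $p_m$. Since $E=\P(\O_{\P^1})\subset\P(\E)$ and $p_m|_E$ is an isomorphism, applying $R^0p_{m*}$ to $\cL\to\iota_*(\cL|_E)$ gives the map $\E^*\otimes\O_{\P^1}(-2)\to\O_{\P^1}(-2)$ induced by the surjection $\E^*\twoheadrightarrow\O_{\P^1}$ dual to $\O_{\P^1}\hookrightarrow\E$; on $H^1$ this is the projection $H^1(\P^1,\O(-2)\oplus\O(-1)^{\oplus3})\to H^1(\P^1,\O(-2))$, an isomorphism because $H^1(\P^1,\O(-1))=0$. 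Hence the edge map is an isomorphism and $H^1=H^2=0$ as required.

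\emph{The citation in (4).} Kawamata's Theorems~5.1 and~6.1 in \cite{kawaodp} are stated under the standing hypothesis that the threefold has \emph{exactly one} ordinary double point, so for $m\geqslant2$ you cannot invoke Theorem~6.1 as a black box. The paper makes the necessary observation: that hypothesis is only used in \cite{kawaodp} to show that the resulting objects generate $D_{\sg}(X)$, whereas the statement needed here---that (1)--(3) force the triangulated subcategory $\langle N_1,N_2\rangle\subset D^b(X_m)$ to be equivalent to $D^b(R_2)$---is extracted from Theorem~5.1 and does not require it. You should add the same remark.
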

\begin{proof}
    (1) Note $\O_{Y_m}(-1)|_E\cong \O_E$ by Proposition \ref{setup} (iv). Then (1) follows from this and that $E$ is a section of $p_m$.

    (2) Let $I$ be the ideal $I_{E/Y_m}$ of $E\subset Y_m$ and let $E_n$ be the subscheme defined by $I^n, n\geqslant 1$. Then
    \begin{equation*}
        I/I^2\cong N_{E/Y_m}^\vee \cong \O_{\P^1}(1)^2, \quad I^n/I^{n+1} \cong \Sym^n(I/I^2) \cong \O_{\P^1}(n)^{n+1}, \quad n\geqslant 2.
    \end{equation*}
    Let $\mu_n\colon E_n\to Y_m, n\geqslant 1$ be the inclusion maps. By the theorem on formal functions we have
    \begin{equation*}
        \widehat{R^pf_{m*}(M)} \cong \varprojlim H^p(E_n, L_0\mu_n^*M)
    \end{equation*}
    for all $M\in \Coh(Y_m)$. Consider for $n\geqslant 1$ the exact sequences
    \begin{equation*}
        0\to I^n/I^{n+1} \to \O_{E_{n+1}}\to \O_{E_n} \to 0.
    \end{equation*}
    Since $M_1|_E\cong \O_{\P^1}(1)$ and $M_2|_E \cong \O_{\P^1}(-1)$, we have $M_i|_{E_n}$ are extensions of sheaves $\O_{\P^1}(l)$ for $l \geqslant -1$. This implies that $H^p(E_n, M_i|_{E_n})=0$ for $p>0, i=1,2$ and all $n$. Thus, $R^pf_{m*}(M_i)=0, p>0$ and $N_i$ is a sheaf for $i=1,2$. 

    Next we show that $\{N_1, N_2\}$ is a simple collection. Let $U=Y_m-E$. Let $j_U\colon U\hookrightarrow Y_m$ and $i_U=f_m\circ j_U\colon U\hookrightarrow X_m$ be open embeddings. Since the codimension of $U\subset Y_m$ is $2$, we have $M_i\cong R^0j_{U*} (M_i|_U)$. Thus,
    \begin{equation*}
        \begin{split}
            \sHom(N_i, N_j) & \cong \sHom(R^0f_{m*}(M_i), R^0f_{m*}(M_j))\\
            & \cong \sHom(R^0f_{m*}R^0j_{U*}(M_i|_U), R^0f_{m*}R^0j_{U*}(M_j|_U))\\
            & \cong \sHom(R^0i_{U*}(M_i|_U), R^0i_{U*}(M_j|_U))\\
            & \cong R^0i_{U*}\sHom(M_i|_U, M_j|_U).
        \end{split}
    \end{equation*}
    Taking $H^0(X_m, -)$ on both sides above, we get
    \begin{equation*}
        \Hom(N_i, N_j)\cong \Hom(M_i|_U, M_j|_U)\cong \Hom(M_i, M_j).
    \end{equation*}
    Since 
    \begin{equation*}
        \begin{split}
            & p_{m*}\O_{Y_m}\cong \O_{\P^1},\quad p_{m*}\O(D_1-D_2)=0,\\
            & p_{m*}\O(D_2-D_1)\cong \E^\vee\otimes \O_{\P^1}(-2) \cong \O_{\P^1}(-2)\oplus \O_{\P^1}(-1)^3,
        \end{split}
    \end{equation*} 
    we have 
    \begin{equation}\label{hp}
        h^p(Y_m, \O(D_i-D_j))=h^p(\P^1, p_{m*}\O(D_i-D_j))=\left\{
        \begin{array}{ll}
            \delta_{ij}, & p=0\\
            0, & (i,j)=(1,2), \forall p\\
            0, &  (i,j)=(2,1), p\neq 1\\
            1, & (i,j)=(2,1), p=1
        \end{array}
        \right..
    \end{equation}
    Hence, $\dim \Hom(N_i, N_j)=\delta_{ij}$.

    (3) Since $\O_E(D_1-D_2)\cong \O_{\P^1}(2)$, we have that $\O(D_1-D_2)|_{E_n}$ is the extension of sheaves $\O_{\P^1}(l)$ for $l\geqslant 2$. The arguments using the theorem on formal functions in (2) imply that
    \begin{equation*}
        R^qf_{m*}\O(D_1-D_2)=0, q>0.
    \end{equation*}
    Similarly, we have $\O(D_1-D_2)|_{E_n}$ is the extension of $\O_{\P^1}(-2)$ with sheaves $\O_{\P^1}(l)$ for $l\geqslant -1$. This implies that 
    \begin{equation*}
        R^1f_{m*}\O(D_2-D_1)\cong \kk,
    \end{equation*}
    which is supported on the node $a_1\in X_m$. We make use of the Leray spectral sequence
    \begin{equation*}
        E^{p,q}_2= H^p(X_m, R^qf_{m*}\O(D_i-D_j)) \Rightarrow E^{p+q}= H^{p+q}(Y_m, \O(D_i-D_j)).
    \end{equation*}
    Since $R^{\geqslant 2}f_{m*}=0$, $R^1f_{m*}$ is supported on the node $a_1\in X_m$ and $\dim(X_m)=3$, we have $E^{p,q}_2=0$ unless $0\leqslant p \leqslant 3, q=0$ or $(p,q)=(0,1)$. In this case there is an exact sequence
    \begin{equation*}
        0\to E^{1,0}_2\to E^1\xrightarrow{\xi} E^{0,1}_2\to E^{2,0}_2\to E^2
    \end{equation*}
    and $E^{3,0}_2 \cong E^3$. Equations (\ref{hp}) give $E^2=E^3=0$ for all $i,j$, and $E^1=E^{0,1}_2=0$ if $(i,j)\neq (2,1)$ and $E^1 \cong E^{0,1}_2 \cong \kk$ if $(i,j) = (2,1)$. We deduce that $\xi\colon E^1 \to E^{0,1}_2$ is an isomorphism for all $i,j$. Thus, $E^{p,0}_2=0$ for all $p>0$ and $i,j$.

    (4) It is a consequence of (1)-(3) and Proposition \ref{kawaprop}.
\end{proof}

For the smooth section $E$ of $p_m\colon Y_m\to \P^1$, Proposition \ref{spinorsheaves} (ii) gives us the Koszul resolution
\begin{equation*}
    0\to \det(\cS_E^\vee) \to \cS_E^\vee \to \O_{Y_m} \to \O_E\to 0.
\end{equation*}
By Lemma \ref{se} (ii) we get
\begin{equation}\label{kozulresfib}
    0\to \O_{Y_m/\P^1}(-1)\otimes p_m^*\O_{\P^1}(1)\to \cS_E(-1) \otimes p_m^*\O_{\P^1}(1)\to p_m^*\O_{\P^1}(-1)\to \O_E(-1)\to 0.
\end{equation}
\begin{thm}\label{mainthm}
    Let $X_m$ be the quintic del Pezzo threefolds with $m$ nodes for $m=1,2,3$. Then there is an admissible semiorthogonal decomposition
    \begin{equation*}
        \D(X_m)=\langle \D(R_{m-1}), \D(R_1), \O_{X_m}, \O_{X_m}(1)\rangle
    \end{equation*}
    where $R_0=\kk$ and $R_1, R_2$ are the path algebras defined by (\ref{rn}).
\end{thm}
\begin{proof}
    From Proposition \ref{pushforward} we have that $f_{m*}\colon \D(Y_m)\to \D(X_m)$ is a Verdier quotient with $\ker(f_{m*})=\langle \O_E(-1)\rangle$. Let $\cT$ be the triangulated subcategory generated by
    \begin{equation*}
        \O_{Y_m}(-1)\otimes \O_{\P^1}(1),\; \cS_E(-1) \otimes \O_{\P^1}(1),\; p_m^*\O_{\P^1}(-1).
    \end{equation*}
    Then Corollary \ref{ymsodnew} reads as
    \begin{equation}\label{dbym}
        \D(Y_m)=\langle \D(R_{m-1}), \cT, \O_{Y_m}, \O_{Y_m}(1)\rangle.
    \end{equation}
    We deduce from the exact sequence (\ref{kozulresfib}) that $\O_E(-1)\in \cT$. Observe that in the SOD (\ref{dbym}) all components except for $\D(R_{m-1})$ are contained in $\DD^{\perf}(Y_m)$. Moreover, $Y_m$ is Gorenstein and $\omega_{Y_m}|_E\cong \O_E$ by Proposition \ref{setup} (i)(iv). Then Proposition \ref{indsod} implies that we have the induced admissible semiorthogonal decomposition
    \begin{equation*}
        \begin{split}
            \D(X_m) &=\langle f_{m*}\D(R_{m-1}), f_{m*}(\cT), f_{m*}(\O_{Y_m}), f_{m*}(\O_{Y_m}(1)) \rangle\\
            & \cong \langle f_{m*}\D(R_{m-1}), f_{m*}(\cT), \O_{X_m}, \O_{X_m}(1)\rangle.
        \end{split}
    \end{equation*}
    Since $\D(R_{m-1})\cap \ker(f_{m*})=\emptyset$, we have $f_{m*}\D(R_{m-1})\cong \D(R_{m-1})$. Since $f_{m*}(\cT)$ is generated by $f_{m*}(\O_{Y_m}(-1)\otimes p_m^*\O_{\P^1}(1))$ and $f_{m*}(p_m^*\O_{\P^1}(-1))$, Lemma \ref{dbr2} (4) implies that $f_{m*}(\cT)\cong \D(R_1)$.
\end{proof}
\bibliography{ndp5.bib}
\bibliographystyle{abbrv}
\end{document}